\DeclareMathAlphabet{\mathpzc}{OT1}{pzc}{m}{it}
\newtheorem{theorem}{Theorem}[section]
\newtheorem{lemma}[theorem]{Lemma}
\theoremstyle{definition}
\newtheorem{definition}[theorem]{Definition}
\newtheorem{example}[theorem]{Example}
\theoremstyle{remark}
\numberwithin{equation}{section}
 \newcommand{\virgolette}{``}
\newcommand{\slantone}[2]{{\raisebox{.1em}{$#1$}\left/\raisebox{-.1em}{$#2$}\right.}}
\newcommand*{\defeq}{\mathrel{\vcenter{\baselineskip0.5ex \lineskiplimit0pt
                     \hbox{\scriptsize.}\hbox{\scriptsize.}}}%
                     =}
\newcommand{\slanttwo}[2]{{\raisebox{.2em}{$#1$} \big/ \raisebox{-.2em}{$#2$}}}
\newcommand\asim{\mathrel{%
  \ooalign{\raise0.1ex\hbox{$\sim$}\cr\hidewidth\raise-0.8ex\hbox{\scalebox{0.9}{$\scriptstyle{x}$}}\hidewidth\cr}}}
\newcommand{\proj}[1]{\ensuremath{\mathbb{P}^{#1}}}
\newcommand{\mani}{\ensuremath{\mathpzc{M}}}
\newcommand{\manir}{\ensuremath{\mathpzc{M}_{red}}}
\newcommand{\stsheaf}{\ensuremath{\mathcal{O}_{\mathpzc{M}}}}
\newcommand{\stsheafred}{\ensuremath{\mathcal{O}_{\mathpzc{M}_{red}}}}
\newcommand{\beq}{\begin{equation}}
\newcommand{\eeq}{\end{equation}}
\newcommand{\bear}{\begin{eqnarray}}
\newcommand{\eear}{\end{eqnarray}}
\begin{document}

\title{SUPERGEOMETRY OF $\Pi$-PROJECTIVE SPACES}


\author{SIMONE NOJA}
\address{Dipartimento di Matematica, Università degli Studi di Milano, Via Saldini 50, 20133 Milano, Italy}
\email{simone.noja@unimi.it}




\begin{abstract} 
In this paper we prove that $\Pi$-projective spaces $\proj n_\Pi$ arise naturally in supergeometry upon considering a non-projected thickening of $\proj n$ related to the cotangent sheaf $\Omega^1_{\proj n}$. In particular, we prove that for $n \geq 2$ the $\Pi$-projective space $\proj n_\Pi$ can be constructed as the non-projected supermanifold determined by three elements $(\proj n, \Omega^1_{\proj n}, \lambda)$, where $\proj n$ is the ordinary complex projective space, $\Omega^1_{\proj n}$ is its cotangent sheaf and $\lambda $ is a non-zero complex number, representative of the fundamental obstruction class $\omega \in H^1 (\mathcal{T}_{\proj n} \otimes \bigwedge^2 \Omega^1_{\proj n}) \cong \mathbb{C}.$ Likewise, in the case $n=1$ the $\Pi$-projective line $\proj 1_\Pi$ is the split supermanifold determined by the pair $(\proj 1, \Omega^1_{\proj 1} \cong \mathcal{O}_{\proj 1} (-2)).$ Moreover we show that in any dimension $\Pi$-projective spaces are Calabi-Yau supermanifolds. To conclude, we offer pieces of evidence that, more in general, also $\Pi$-Grassmannians can be constructed the same way using the cotangent sheaf of their underlying reduced Grassmannians, provided that also higher, possibly fermionic, obstruction classes are taken into account. This suggests that this unexpected connection with the cotangent sheaf is characteristic of $\Pi$-geometry.  

\end{abstract}

\maketitle

\tableofcontents

\section{Introduction: Supergeometry and $\Pi$-Projective Geometry}

The discovery of \emph{supersymmetry} in the Seventies brought a great deal of attention on \emph{supermathematics}. In this context, the \virgolette russian school'', after having spawned this research area by the pioneering works of F. Berezin and, later, of D. Leites, kept being a driving force during the Eighties. In particular, triggered by the rapid developments and the interest in the back then newborn \emph{superstring theory}, Yu.I. Manin, together with his former students I.B Penkov, I.A  Skornyakov and A.M. Levin, addressed the problem of lying down sound algebraic geometric foundations for supergeometry. This culminated in two dense books, \cite{Manin} and \cite{ManinNC}.

The work of Manin and his collaborators made clear that, if on the one hand it is true that many results in ordinary algebraic geometry have straightforward generalisations to a supergeometric context, on the other hand there are some striking exceptions. Among these, the most notable it is certainly the theory of differential forms on supermanifolds and the related integration theory. This is currently an active research area due to its close relation to superstring perturbation theory and certain supergravity formulations. A less-known remarkable difference with ordinary algebraic geometry is concerned with the role of \emph{projective superspaces}, the obvious generalisation of projective spaces, as a natural set-up and ambient space. Indeed, there are important examples of supermanifolds that fail to be \emph{projective}, i.e. they do not posses any invertible sheaf that allows for an embedding into projective superspaces \cite{PenkovSk}. For example, there is no natural generalisation of the Pl\"{u}cker map, so that super Grassmannians cannot in general be embedded into projective superspaces. This led Manin to suggest that in a supergeometric setting, invertible sheaves might not play the same fundamental role they play in ordinary algebraic geometry. Instead, together with Skornyakov, he proposed as a suitable substitute of invertible sheaves in algebraic supergeometry, the notion of $\Pi$\emph{-invertible sheaves}. These are locally-free sheaves of rank $1|1$ endowed with a specific \emph{odd} symmetry, locally exchanging the even and odd components, called $\Pi$\emph{-symmetry.} The spaces allowing for such sheaves to be defined were first constructed by Manin, see \cite{Manin}: these are called $\Pi$-projective spaces $\proj {n}_\Pi$ and more in general $\Pi$-Grassmannians. The relevance of these geometric objects became apparent along with the generalisation to a supersymmetric context of the theory of elliptic curves and theta functions due to Levin. In particular, it was realised in \cite{Levin1} and \cite{Levin2} that the correct supergeometric generalisation of theta functions, called \emph{supertheta functions}, should not be sections of a certain invertible sheaves, but instead sections of $\Pi$-invertible sheaves and every elliptic supersymmetric curves can be naturally embedded into a certain product of $\Pi$-projective spaces $\proj n_\Pi$ by means of supertheta functions. Recently, following an observation due to Deligne, Kwok has provided in \cite{Kwok} a different description of $\Pi$-projective spaces $\proj {n}_\Pi$ by constructing them as suitable quotients by the algebraic supergroup $\mathbb{G}^{1|1}_m = \mathbb{D}^\ast$, the multiplicative version of the \emph{super skew field} $\mathbb{D}$, which is a non-commutative associative superalgebra, thus making apparent a connection between $\Pi$-projective geometry and the broader universe of non-commutative geometry. 

In this paper we will provide a new construction of $\Pi$-projective spaces $\proj {n}_\Pi$, showing how they arise naturally as non-projected supermanifolds over $\proj {n}$, upon choosing the \emph{fermionic sheaf} of the supermanifold to be the cotangent sheaf $\Omega^1_{\proj n}$. More precisely, we will show that for $n > 1$ $\Pi$-projective spaces can be defined by three ordinary objects, a projective space $\proj n$, the sheaf of 1-forms $\Omega^1_{\proj n}$ defined on it and a certain cohomology class, called \emph{fundamental obstruction class}, $\omega \in H^1 (\proj {n}, \mathcal{T}_{\proj {n}} \otimes \bigwedge^2 \Omega^1_{\proj n}),$ where $\mathcal{T}_{\proj n}$ is the tangent sheaf of $\proj n$. In the case $n=1$ one does not need any cohomology class and the data coming from the projective line $\proj 1$ and the cotangent sheaf $\Omega^1_{\proj 1} = \mathcal{O}_{\proj 1}(-2)$ are enough to describe the $\Pi$-projective line $\proj 1_{\Pi}.$ Moreover we show that $\Pi$-projective spaces are \emph{all} Calabi-Yau supermanifolds, that is, they have trivial Berezinian sheaf, a feature that makes them particularly interesting for physical applications. 

Finally, we provide some pieces of evidence that the relation with the cotangent sheaf of the underlying manifold is actually a characterising one in $\Pi$-geometry. Indeed, not only $\Pi$-projective spaces, but also more in general $\Pi$-Grassmannians can be constructed as certain non-projected supermanifolds starting from the cotangent sheaf of the underlying reduced Grassmannias. We show by means of an example that in this context the non-projected structure of the supermanifold becomes in general more complicated and, in addition to the fundamental one, also higher obstruction classes enter the description.

The paper is organised as follows: in the first section we briefly review some basic materials about (complex) supermanifolds, we give some elements of $\Pi$-projective geometry and, following \cite{Manin}, we introduce the $\Pi$-projective spaces $\proj{n}_\Pi$ as closed sub-supermanifolds of certain super Grassmannians, whose construction will also be explained in the section. Having introduced the concept of non-projected supermanifold and the obstruction class in cohomology, detecting whether a supermanifold is non-projected or not, we finally construct the $\Pi$-projective spaces as the non-projected supermanifolds arising from the cotangent sheaf over $\proj n$ and a certain choice of a representative for the cohomology class obstructing the splitting of the supermanifold. We then show that $\Pi$-projective spaces have trivial Berezinian sheaf, thus proving they are examples of non-projected Calabi-Yau supermanifolds. In the last section we briefly hint at what happens in the more general context of $\Pi$-Grassmannians, by discussing the example of $G_{\Pi} (2,4)$. \\

\noindent {\bf Acknowledgments:} I am in debt with my thesis supervisor Bert van Geemen for pointing out the details of a construction explained in the Appendix of this paper. I thank Sergio Cacciatori and Riccardo Re for their help, support and suggestions. 

\section{Super Grassmannians and $\Pi$-Projective Geometry}

By referring to \cite{Manin} as general reference for the theory of supermanifolds, we shall just give the most important definitions in order to establish some terminology and notation. Note that we will always be working in the (super)analytic category and so we let our ground field be the complex numbers $\mathbb{C}.$
\begin{definition}[Complex Supermanifold] A complex supermanifold is a locally ringed space $(\mani, \stsheaf)$, where $\mani$ is a topological space and $\stsheaf$ is a sheaf of supercommutative rings on $\mani.$ We call $\stsheaf$ the \emph{structure sheaf} of the supermanifold.
\end{definition}
\noindent If we let $\mathcal{J}_\mani$ be the \emph{ideal of nilpotents} contained in $\stsheaf$, then the following conditions are satisfied: 
\begin{itemize} 
\item the pair $(\mani, \stsheafred)$, where $\stsheafred \defeq \stsheaf / \mathcal{J}_\mani$ defines a \emph{complex manifold}, called the \emph{reduced space} of the supermanifold (\mani, \stsheaf). This corresponds to the existence of a morphism of supermanifolds $\iota : \manir \rightarrow \mani$ for every supermanifold $\mani$, such that $\iota$ is actually a pair $\iota \defeq (\iota, \iota^\sharp)$, with $\iota: \mani \rightarrow \mani$ the identity on the underlying topological space and $\iota^\sharp : \stsheaf \rightarrow \stsheafred$ is the quotient map by the ideal of nilpotents;
\item the quotient $\mathcal{J}_\mani / \mathcal{J}_\mani^2$ defines a locally-free sheaf of $\stsheafred$-modules and it is called the \emph{fermionic sheaf}. Notice that the fermionic sheaf has rank $0|q$, if $q$ is the odd dimension of the supermanifold $\mani$; 
\item the structure sheaf $\stsheaf$ is \emph{locally} isomorphic to the exterior algebra $\bigwedge^\bullet_{\stsheafred} \mathcal{F}_\mani$, seen as a $\mathbb{Z}_2$-graded algebra. A supermanifold whose structure sheaf is given by an exterior algebra is said to be \emph{split}. If this is not the case, the supermanifold is called \emph{non-projected}. 
\end{itemize}
In what follows we will simply denote a supermanifold by $\mani$ and its reduced space with $\manir$. An important class of example of (split) complex supermanifolds is provided by the projective superspaces, $\proj {n|m} \defeq (\proj n, \bigwedge^\bullet \left (\mathbb{C}^m \otimes_{\mathbb{C}} \mathcal{O}_{\proj n}(-1) \right ))$. As in ordinary complex algebraic geometry, also in algebraic supergeometry projective superspaces $\proj {n|m}$ are particular cases of super Grassmannians. Indeed, the supersymmetric generalisation of an ordinary Grassmannians is a space parametrising $a|b$-dimensional linear subspaces of a given $n|m$-dimensional space $V^{n|m}$. In what follows, we will consider the case $V^{n|m}$ given by a vector superspace isomorphic to $\mathbb{C}^{n|m}$ and we briefly review the construction of Grassmannian supermanifolds by the patching technique, via their \virgolette big cells'' description. For concreteness, we will follow closely \cite{CNR}, inviting the reader willing to have a more general and detailed treatment of relative super Grassmannians to refer in particular to \cite{Manin} and to \cite{ManinNC} for super flag varieties.

We take $\mathbb{C}^{n|m}$ be such that $n|m = c_0|c_1 + d_0|d_1$. One can then view $\mathbb{C}^{n|m}$ as $\mathbb{C}^{c_0 + d_0} \oplus (\Pi \mathbb{C})^{c_1 + d_1}$, where $\Pi $ is the parity changing functor that indicates the reversing of the parity, and since $\mathbb{C}^{n|m}$ is freely-generated one can write its elements as row vectors with respect to a certain basis, \bear
 \mathbb{C}^{n|m} = \mbox{Span} \{ e_1^{0}, \ldots, e_n^{0} | e^{1}_1, \ldots, e^{1}_m \},\eear
 where the upper indices refer to the $\mathbb{Z}_2$-parity.
Now, we take a collection of indices, call it $I = I_0 \cup I_1$, such that $I_0 $ is a collection of $d_0 $ out of the $n $ indices of $\mathbb{C}^n$ and likewise $I_1$ is a collection of $d_1$ indices out of $m$ indices of $\Pi \mathbb{C}^m$. Then, if $\mathcal{I}$ is the set of such collections of indices $I$ one finds that $
\mbox{card}(\mathcal{I}) = \mbox{card} (\mathcal{I}_{0} \times \mathcal{I}_{1}) = {n \choose d_0 } \cdot {m \choose d_1 }:$
this will be the number \emph{super big cells} that cover the super Grassmannian.

We then associate a set of even and odd (complex) variables $\{ x^{\alpha \beta}_{I} \,| \, \xi^{\alpha \beta}_I \}$ to each element $I \in \mathcal{I}_I$: these even and odd variables can be arranged to fill in the places of a $ d_0|d_1 \times n|m = a|b \times (c_0 + d_0) | (c_1+d_1) $ super matrix in a way such that the columns having indices in $I \in \mathcal{I}_I$ forms a $(d_0 + d_1) \times (d_0 + d_1 )$ unit matrix, as follows 
\begin{equation}
\mathcal{Z}_{{I}} \defeq 
\left (
\begin{array}{ccc|ccc||ccc|ccc}
& &  & 1 \; & & & & & & & & \\
\;  & \; x_I \; & \; &  & \ddots & & & 0 & &\;  &\; \xi_I \; &\;  \\
& & & & & \; 1& & & & & & \\
\hline \hline
& & & & & & 1\; & & & & & \\
&\; \xi_I \; & & & 0& & &\ddots & & &\;  x_I \; & \\
& & & & & & & &\; 1 & & & 
\end{array}
\right ),
\end{equation}

We define the superspace $\mathcal{U}_I \rightarrow \mbox{Spec}\,  \mathbb{C} \cong \{ pt\}$ as the analytic superspace $\{ pt \} \times \mathbb{C}^{d_0 \cdot c_0 + d_1 \cdot c_1 | d_0\cdot c_1 + d_1\cdot c_0} \cong \mathbb{C}^{d_0 \cdot c_0 + d_1 \cdot c_1 | d_0\cdot c_1 + d_1\cdot c_0} $, having $\{ x^{\alpha \beta }_I \, | \, \xi^{\alpha \beta}_I \}$ as the complex coordinates over the point. The superspace $\mathcal{U}_I$ is called a \emph{super big cell} of the super Grassmannian when it is represented as explained above, via the super matrix $\mathcal{Z}_I$.

Two superspaces $\mathcal{U}_I$ and $\mathcal{U}_J$ for two different $I, J \in \mathcal{I}$ can be glued together as follows. Given $\mathcal{Z}_I$, the super big cell of $\mathcal{U}_I$, one considers the super submatrix $\mathcal{B}_{IJ}$ formed by the columns having indices in $J$ and we let $\mathcal{U}_{IJ} \defeq \mathcal{U}_I \cap \mathcal{U}_J$ be the maximal sub-superspace of $\mathcal{U}_I$ such that on $\mathcal{U}_{IJ}$ we have that $\mathcal{B}_{IJ}$ is invertible. Notice that for this to be true it is sufficient that the two determinants of the even parts of the matrix $\mathcal{B}_{IJ}$ are different from zero, as the odd parts do not affect invertibility. On the superspace $\mathcal{U}_{IJ}$ one has two sets of coordinates, $\{x^{\alpha \beta}_I \, | \, \xi^{\alpha \beta}_{I} \}$ and $\{ x^{\alpha \beta}_J \, |\, \xi^{\alpha \beta}_J \}$. One can pass from one system of coordinates to the other by the transformation rule $\mathcal{Z}_J  = \mathcal{B}^{-1}_{IJ} \mathcal{Z}_I.$ A concrete example of this procedure is provided in \cite{CNR}.

Glueing together all the superspaces $\mathcal{U}_I$ we obtain the super Grassmannian $G(d_0|d_1; {n|m})$. This is given as the quotient 
\bear
G (d_0|d_1;{n|m}) \defeq \slanttwo{\bigcup_{I \in \mathcal{I}}}{\mathcal{R}},
\eear 
where $\mathcal{R}$ are the equivalence relations generated by the change of coordinates described above. As described in \cite{Manin} (Chapter 4, \S{} 3), the maps $\psi_{\mathcal{U}_I} : \mathcal{U}_I \rightarrow G(d_0|d_1;{n|m})$ are isomorphisms onto open sub-superspace of $G(d_0|d_1;{n|m})$: so that the super big cells provide a \emph{local} description of the super Grassmannian, in the same way as a usual (complex) supermanifold is locally isomorphic to a superspace of the kind $\mathbb{C}^{n|m}.$  

As mentioned above, as in the ordinary complex algebraic geometric context, (complex) projective superspaces are the most immediate examples of super Grassmannians. Indeed, as one has $\proj {n} \defeq G(1; {n+1})$ in ordinary algebraic geometry, similarly one finds $\proj {n|m} \defeq G (1|0;  {n+1|m})$ in algebraic supergeometry.

We will now provide the reader with some basic notions of $\Pi$-projective geometry. As far as the author is concerned the most straightforward and immediate way to introduce $\Pi$-projective spaces and their geometry is via super Grassmannians. This approach has also the merit to make clear that $\Pi$-projective spaces are in general embedded in super Grassmannians, putting forward super Grassmannians as universal embedding spaces in a supergeometric context. \\
Our starting point is the definition of $\Pi$-symmetry. We will give a general definition on sheaves.
\begin{definition}[$\Pi$-Symmetry] Let $\mathcal{G}$ be a locally-free sheaf of $\stsheaf$-modules of rank $n|n$ on a supermanifold $\mani$, a $\Pi$-symmetry is an isomorphism such that $p_{\Pi} : \mathcal{G} \longrightarrow \Pi \mathcal{G}$ and such that $p_{\Pi }^2 = id.$
\end{definition}
We now work locally and use simply the supercommutative free module $\mathbb{C}^{n|n} = \mathbb{C}^{n} \oplus \Pi \mathbb{C}^{n}$, instead of a generic sheaf: we can therefore choose a certain basis of even elements such that $\mathbb{C}^n = \mbox{Span} \{ e_1, \ldots, e_n \} $ and we generate a basis for the whole $\mathbb{C}^{n|n}$ as follows
\begin{equation}
\mathbb{C}^{n|n} = \mbox{Span} \{ e_1, \ldots, e_n \,| \, p_{\Pi} e_1, \ldots, p_{\Pi}e_n \}.
\end{equation}
Clearly, the action of $p_\Pi$ exchanges the generators of $\mathbb{C}^n$ and $\Pi \mathbb{C}^n$.\\ 
We observe that somehow the presence of a $\Pi$-symmetry should remind us of a \virgolette physical supersymmetry'', as it transform even elements in odd elements and viceversa. Also, as supersymmetry requires a Hilbert space allowing for the the same amount of bosonic and fermionic states, similarly $\Pi$-symmetry imposes an equal number of even and odd dimensions for a certain \virgolette ambient space'', as it might be the supercommutative free module $\mathbb{C}^{n|n}$ above. \\
Along this line, one can give the following 
\begin{definition}[$\Pi$-Symmetric Submodule] Let $M$ be a supercommutative free $A$-module such that $M=A^n \oplus \Pi A^n $. Then we say that a super submodule $S\subset M$ is $\Pi$-symmetric if it is stable under the action of $p_\Pi.$   
\end{definition}
\noindent This has as a consequence the following obvious lemma: 
\begin{lemma}\label{submodules} Let $M$ be a supercommutative free $A$-module such that $M=A^n \oplus \Pi A^n$ together with a basis given by $\{ e_1, \ldots, e_n \,| \, p_{\Pi} e_1, \ldots, p_{\Pi}e_n \}$. Then a super submodule of M is $\Pi$-symmetric if and only if for every element $v = \sum_{i=1}^n x^i e_i + \xi^i p_{\Pi} e_i$ it also contains $v_{\Pi} = \sum_{i = 1}^n (- \xi^i e_i  + x^i p_\Pi e_i )$
\end{lemma}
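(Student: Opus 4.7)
The statement is essentially a computation in coordinates, so my plan is to unfold the definition of $\Pi$-symmetric submodule and verify explicitly that $p_\Pi(v)$ produces the expression $v_\Pi$ displayed in the statement. The only substantive point is getting the signs right, since $p_\Pi$ is an \emph{odd} $A$-linear map and therefore satisfies a Koszul-type rule when one pulls scalars across it.

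First I would record what $\Pi$-symmetry means for a submodule $S\subset M$: by the preceding definition, $S$ is $\Pi$-symmetric exactly when $p_\Pi(S)\subseteq S$. Since $p_\Pi^2=\mathrm{id}$, this containment is automatically an equality, so stability is a one-sided condition. Hence the lemma reduces to showing that $p_\Pi(v)$ equals the stated $v_\Pi$ for an arbitrary $v=\sum_i x^i e_i + \xi^i p_\Pi e_i\in M$.

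For this computation I would invoke $A$-linearity of the odd map $p_\Pi$. On the even summand we simply get $p_\Pi(x^i e_i)=x^i\, p_\Pi e_i$ because $x^i$ is even and the sign $(-1)^{|x^i|\cdot|p_\Pi|}=1$. On the odd summand the scalar $\xi^i$ is odd and $p_\Pi$ is odd, so the sign rule gives
\[
p_\Pi(\xi^i\, p_\Pi e_i)=(-1)^{|\xi^i|\cdot|p_\Pi|}\,\xi^i\, p_\Pi(p_\Pi e_i)=-\xi^i\, e_i,
\]
using $p_\Pi^2=\mathrm{id}$. Summing, $p_\Pi(v)=\sum_i\bigl(-\xi^i e_i + x^i\, p_\Pi e_i\bigr)=v_\Pi$, exactly as claimed.

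With this identity in hand the lemma is immediate in both directions. If $S$ is $\Pi$-symmetric then $v\in S$ forces $v_\Pi=p_\Pi(v)\in S$. Conversely, if $v_\Pi\in S$ whenever $v\in S$, then $p_\Pi(S)\subseteq S$, i.e. $S$ is $\Pi$-symmetric. The only step that could be viewed as an obstacle is the sign convention for odd $A$-linear maps, but once that is settled the rest is formal; no deeper structural argument is required.
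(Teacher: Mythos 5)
Your proposal is correct and follows the same route as the paper: the paper simply remarks that $v_\Pi$ is the $\Pi$-transform of $v$ with a sign coming from parity, and you have just made that Koszul sign computation explicit. Nothing is missing.
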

\noindent The proof is clear, as $v_{\Pi}$ is nothing but the $\Pi$-transformed partner of $v$. Notice, though, the presence of a minus sign due to parity reasons.\\
Such $\Pi$-symmetric submodules allow us to define $\Pi$-projective superspaces, we call them $\proj{n}_{\Pi}$, and, more in general, $\Pi$-symmetric super Grassmannians. The construction follows closely the one of super Grassmannians of the kind $G(1|1; {n+1|n+1})$, but we only take into account $\Pi$-symmetric free submodules characterised as by the Lemma \ref{submodules}. These, in turn, allow us to write down the $n+1$ affine super cells covering the supermanifold $\proj {n}_\Pi$, each of these related to an affine supermanifold of the kind $\tilde{\mathcal{U}_i} \defeq (\mathcal{U}_i, \mathbb{C}[z_{1i}, \ldots, z_{ni}, \theta_{1i}, \ldots, \theta_{ni}]) \cong \mathbb{C}^{n|n}$ and where $\mathcal{U}_i $ are the usual open sets covering $\proj n$. We try to make these considerations clear by considering the case of the the $\Pi$-projective line, we call it $\proj {1}_\Pi$.
\begin{example}[$\Pi$-Projective Line $\proj {1}_\Pi$] This is the classifying space of the $\Pi$-symmetric $1|1$ dimensional super subspaces of $\mathbb{C}^{2|2}$, corresponding to the super Grassmannian $G_\Pi (1|1; {2|2})$, where subscript refers to the presence of further $\Pi$-symmetry with respect to the ordinary case treated previously. This is covered by two affine superspaces, each isomorphic to $\mathbb{C}^{1|1}$, having coordinates in the super big-cells notation given by
\begin{align}
\mathcal{Z}_{\mathcal{U}_0} \defeq 
\left ( \begin{array}{ccc||ccc} 
1 & & x_0 & 0 & &  \xi_0 \\
\hline \hline
0 & & - \xi_0  & 1 & & x_0
\end{array}
\right ) \qquad \qquad
\mathcal{Z}_{\mathcal{U}_1} \defeq
\left ( \begin{array}{ccc||ccc} 
x_1 & & 1 & \xi_1 & &  0 \\
\hline \hline
-\xi_1 & & 0  & x_1 & & 1 
\end{array}
\right ).
\end{align}
It is then not hard to find the transition functions in the intersections of the charts either by means of allowed rows and column operation or by the method explained above. By rows and columns operations, for example, one finds: 
\begin{align}
& \left ( \begin{array}{cc||cc} 
1 &  x_0 & 0 &   \xi_0 \\
\hline \hline
0 & - \xi_0  & 1  & x_0
\end{array}
\right )  \stackrel{R_0 / x_0, R_1 /x_0}{\longrightarrow} 
\left ( \begin{array}{cc||cc} 
1/x_0 & 1 & 0 &   \xi_0/x_0 \\
\hline \hline
0 &  - \xi_0/x_0  & 1/x_0 & 1
\end{array}
\right ) \nonumber \\
& \left ( \begin{array}{cc||cc} 
1/x_0 & 1 & 0  &  \xi_0/x_0 \\
\hline \hline
0 & - \xi_0/x_0  & 1/x_0 & 1
\end{array}
\right ) \stackrel{R_0 - \xi_0/x_0 R_1 }{\longrightarrow} \left ( \begin{array}{cc||cc} 
1/x_0 & 1 & - \xi_0/x_0^2 & 0\\
\hline \hline
0 & - \xi_0/x_0  & 1/x_0 & 1
\end{array}
\right ) \nonumber \\
& \left ( \begin{array}{cc||cc} 
1/x_0  & 1 & - \xi_0/x_0^2 & 0\\
\hline \hline
0 & - \xi_0/x_0  & 1/x_0 & 1
\end{array}
\right ) \stackrel{R_1 + \xi_0/x_0 R_0 }{\longrightarrow} 
\left ( \begin{array}{cc||cc} 
1/x_0 & 1 & - \xi_0/x_0^2 & 0\\
\hline \hline
\xi_0/x_0^2 & 0  & 1/x_0 & 1
\end{array}
\right ) \nonumber.
\end{align}
One can then read the transition functions in the intersection of the affine charts, characterising the structure sheaf $\mathcal{O}_{\proj {1}_\Pi}$ of the $\Pi$-projective line: 
\begin{equation} 
x_1 = \frac{1}{x_0}, \qquad \qquad \xi_1 = - \frac{\xi_1}{x_0^2}.
\end{equation}
This leads to the conclusion that the $\Pi$-projective line $\proj {1|1}_\Pi $ is the $1|1$-dimensional supermanifold that is completely characterised by the pair $(\proj {1}, \mathcal{O}_{\proj 1} (-2) ).$ We underline that $\mathcal{O}_{\proj 1} (-2) \cong \Omega^1_{\proj 1}$ over $\proj 1$: we will see in what follows that this is not by accident. 
\end{example}
\noindent Before we go on we remark two facts.
First, one can immediately observe that the $\Pi$-projective line is substantially different compared with the projective superline $\proj {1|1}$. Indeed $\proj {1|1}$ is (completely) characterised by the pair $(\proj 1, \mathcal{O}_{\proj 1} (-1))$. Also, without going into details, we note that while $\proj {1|1}$ can be structured as \emph{super Riemann surface}, the $\Pi$-projective line $\proj 1_\Pi$ cannot. Indeed, the fermionic bundle $\mathcal{F}_{\proj {1|1}_\Pi} = (\mathcal{O}_{\proj 1_\Pi} )_1$ is given by $\mathcal{O}_{\proj 1}(-2)$ and this does not define a \emph{theta characteristic} on $\proj 1.$ Indeed, there is just one such, and it is given by $\mathcal{O}_{\proj 1} (-1)$, therefore the only genus zero super Riemann surface is given by the ordinary $\proj {1|1} = (\proj {1}, \mathcal{O}_{\proj 1} (-1))$. This has a certain importance in the mathematical formulation of superstring perturbation theory. \\
Secondly, we recall that when the odd dimension is 1, the pair $(\manir, \mathcal{F}_\mani)$, consisting into the reduced space and the fermionic sheaf completely characterises the supermanifold $\mani$, as clearly $(\mathcal{O}_\mani)_0 \cong \stsheafred$ and $\mathcal{J}_\mani \cong (\mathcal{O}_\mani)_1 \cong \mathcal{F}_\mani$ and multiplication in $\stsheaf = \stsheafred \oplus \mathcal{F}_\mani$ is defined by the action of $\stsheafred$ on $\mathcal{F}_\mani$ in a unique way, as $\mathcal{F}_\mani$ is an ideal such that $\mathcal{F}_\mani^2=0$. This is observed, for example, in \cite{Manin} (Chapter 4, \S{} 2, Proposition 8). \\
In case the supermanifold has odd dimension greater than 1 it is no longer true in general that the supermanifold is completely determined by the pair $(\manir, \mathcal{F}_\mani)$ - if this is the case, then the supermanifold is split -. We will discuss these issues in the next section of the paper. \\
For $n>1$, $\Pi$-projective spaces $\proj n_\Pi $ are not indeed determined just by the pair $(\manir, \mathcal{F}_\mani)$. In the following theorem we use the same method as above to write down the generic form of the transition functions of $\proj n_\Pi$: we will see that a certain nilpotent correction appears in the even transition functions.
\begin{theorem} \label{pitrans} Let $\proj n_{\Pi} \defeq (\proj n, \mathcal{O}_{\proj n_{\Pi}})$ be the $n$-dimensional $\Pi$-projective space and let $\tilde{\mathcal{U}_i} = (\mathcal{U}_i, \mathbb{C}[z_{ji}, \theta_{ji}]) \cong \mathbb{C}^{n|n}$ for $i=0,\ldots,n$, $j\neq i$ be the affine supermanifolds covering $\proj n_{\Pi}$. In the intersections $\mathcal{U}_i \cap \mathcal{U}_j$ for $0 \leq i < j \leq n+1$ the transition functions characterising $\mathcal{O}_{\proj n_{\Pi}}$ have the following form:
\begin{align} \label{transi1}
& \ell \neq i : \qquad z_{\ell j} = \frac{z_{\ell i}}{z_{j i}} + \frac{\theta_{j i} \theta_{\ell i}}{z^2_{ji}}, \qquad \theta_{\ell j} = \frac{\theta_{\ell i}}{z_{ji}} - \frac{z_{\ell i}}{z^2_{ji}} \theta_{ji};   \\
& \label{transi2} \ell = i : \qquad z_{i j} = \frac{1}{z_{ji}} , \qquad \theta_{i j} = - \frac{\theta_{ji}}{z^2_{ji}}.
\end{align}
\end{theorem}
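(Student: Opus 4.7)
The plan is to proceed exactly as in the worked example of $\proj 1_\Pi$, but for a general chart index $i$ and a general target chart $j$, by writing down the $\Pi$-symmetric super big cell of $\proj n_\Pi$ in chart $\mathcal{U}_i$ and then computing the change-of-frame via the method $\mathcal{Z}_J = \mathcal{B}_{IJ}^{-1} \mathcal{Z}_I$ recalled after Lemma \ref{submodules}. Concretely, in chart $\mathcal{U}_i$ the $(1|1) \times (n+1|n+1)$ super matrix has the form dictated by the $\Pi$-symmetry of Lemma \ref{submodules}: in the upper row the $i$-th even column is $1$ and the $i$-th odd column is $0$, while the remaining even (resp.\ odd) entries are the coordinates $z_{\ell i}$ (resp.\ $\theta_{\ell i}$); the lower row is obtained by $\Pi$-duplication, carrying the sign prescribed by the Lemma, namely $-\theta_{\ell i}$ in the even columns and $z_{\ell i}$ in the odd columns, with a $0$ and a $1$ in position $i$ respectively.

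First I would isolate the transition block $\mathcal{B}_{ij}$, i.e.\ the two columns of $\mathcal{Z}_{\mathcal{U}_i}$ indexed by $j$ in the even and odd halves. By the $\Pi$-symmetric structure this $(1|1)\times(1|1)$ block is
\[
\mathcal{B}_{ij} = \begin{pmatrix} z_{ji} & \theta_{ji} \\ -\theta_{ji} & z_{ji} \end{pmatrix}.
\]
Using $\theta_{ji}^2 = 0$, its inverse is readily seen to be
\[
\mathcal{B}_{ij}^{-1} = \begin{pmatrix} 1/z_{ji} & -\theta_{ji}/z_{ji}^2 \\ \theta_{ji}/z_{ji}^2 & 1/z_{ji} \end{pmatrix}.
\]
This is the only genuinely \emph{new} computation compared to the $\proj 1_\Pi$ example; the nilpotent correction $\theta_{ji}\theta_{\ell i}/z_{ji}^2$ that appears in \eqref{transi1} is already visible here as the off-diagonal entry of $\mathcal{B}_{ij}^{-1}$, so that its eventual appearance in the even transition functions is forced by the $\Pi$-symmetry of the starting matrix.

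Next I would compute $\mathcal{B}_{ij}^{-1} \mathcal{Z}_{\mathcal{U}_i}$ column by column. For $\ell \neq i,j$ the even column $(z_{\ell i}, -\theta_{\ell i})^{t}$ and odd column $(\theta_{\ell i}, z_{\ell i})^{t}$ get multiplied by $\mathcal{B}_{ij}^{-1}$, and reading off the upper entries (bottom entries are their $\Pi$-partners, which serves as an automatic consistency check) gives exactly \eqref{transi1}. The column $\ell = i$ is simpler: the even column is $(1,0)^{t}$ and the odd column is $(0,1)^{t}$, so $\mathcal{B}_{ij}^{-1}$ acts by picking its first and second columns, producing $z_{ij} = 1/z_{ji}$ and $\theta_{ij} = -\theta_{ji}/z_{ji}^2$ as in \eqref{transi2}. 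Equivalently, one can perform the explicit row operations exhibited in the $\proj 1_\Pi$ example, in the order: divide the two rows by $z_{ji}$, then kill the odd entry at position $j$ of each row by a suitable odd row combination; this reproduces the same answer with no extra ingredient.

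The only delicate point is sign bookkeeping: since the $\Pi$-symmetry in Lemma \ref{submodules} introduces a sign in the odd-to-even swap, one has to be consistent about whether $\mathcal{B}_{ij}^{-1}$ is applied on the left or right and about the relative sign between the two rows. The main obstacle is therefore purely notational rather than conceptual; checking that the resulting matrix $\mathcal{B}_{ij}^{-1}\mathcal{Z}_{\mathcal{U}_i}$ is again $\Pi$-symmetric (i.e.\ its lower row is the $\Pi$-transform of its upper row) provides a clean internal test that confirms \eqref{transi1}--\eqref{transi2}.
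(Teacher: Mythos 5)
Your proposal is correct and follows essentially the same route as the paper: the paper's proof performs the allowed row operations on $\mathcal{Z}_{\mathcal{U}_i}$ to reach the form of $\mathcal{Z}_{\mathcal{U}_j}$, and your left-multiplication by $\mathcal{B}_{ij}^{-1}$ is precisely the composite of those row operations, i.e.\ the glueing rule $\mathcal{Z}_J=\mathcal{B}_{IJ}^{-1}\mathcal{Z}_I$ that the paper itself offers as the equivalent alternative in the $\proj 1_\Pi$ example. Your explicit computation of $\mathcal{B}_{ij}^{-1}$ and the column-by-column check (including the $\Pi$-symmetry consistency of the lower row) correctly reproduce \eqref{transi1}--\eqref{transi2}.
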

\begin{proof} $\proj n_\Pi$ is covered by $n+1$ affine charts, whose coordinates are given in the super big cell notation by 
\begin{align}
\mathcal{Z}_{\mathcal{U}_i} = \left ( \begin{array}{ccccc||ccccc}
z_{1i} & \cdots & 1 & \cdots & z_{ni} \; & \theta_{1i} & \cdots & 0 & \cdots & \theta_{ni} \\
\hline \hline
- \theta_{1i} & \cdots & 0 & \cdots & - \theta_{ni} \; & z_{1i} & \cdots & 1 & \cdots & z_{ni} 
\end{array}
\right ), 
\end{align}
where the 1's and 0's sit at the $i$-th positions. Considering the super big cell $\mathcal{Z}_{\mathcal{U}_j}$ for $j \neq i $ one can find the transition functions by bringing $\mathcal{Z}_{\mathcal{U}_i}$ in the form of $\mathcal{Z}_{\mathcal{U}_j}$ by means of allowed rows and column operations (and remembering that it is not possible to divide by a nilpotent element) as done above in the case of $\proj 1_\Pi$. It is easily checked that this yields the claimed result.
\end{proof}
\noindent Later on in the paper we will see that the same transition functions characterising $\proj n_\Pi$ arise naturally upon the choice of the cotangent sheaf as the fermionic sheaf for a supermanifold over $\proj n$.

\section{A Nod to Non-Projected Supermanifolds}

We have anticipated in the previous section that not all of the supermanifolds can be looked at simply as exterior algebras over an ordinary manifold, that is, not all of the supermanifolds are split supermanifolds. There exist non-projected supermanifolds. We now recast these ideas in a more precise form. In particular, we have seen in the introduction that the structure sheaf $\stsheaf$ of a supermanifold can be seen as an \emph{extension} of $\stsheafred$ by $\mathcal{J}_\mani$, that is we have a short exact sequence
\bear
\xymatrix@R=1.5pt{ 
0 \ar[r] & \mathcal{J}_\mani \ar[r] &  \stsheaf \ar[r] & \stsheafred \ar[r] & 0.
 }
\eear
A very natural question that arises when looking at this exact sequence is whether is it \emph{split} or not. In other words, one might wonder whether there exists a morphism of supermanifolds, we call it $\pi : \mani \rightarrow \manir $, splitting the sequence as follows
\bear
\xymatrix@R=1.5pt{ 
0 \ar[r] & \mathcal{J}_\mani \ar[r] &  \stsheaf  \ar[r]_{\iota^\sharp} & \ar@{-->}@/_1.3pc/[l]_{\pi^\sharp} \stsheafred \ar[r] & 0.
 }
\eear
Supermanifolds that do posses a splitting morphism $\pi : \mani \rightarrow \manir$ such that $\pi \circ \iota = id_{\manir}$ are called \emph{projected}. \\
If there exists a projection $\pi : \mani \rightarrow \manir $ for $\mani$, then its structure sheaf is such that $\stsheaf \cong \stsheafred \oplus \mathcal{J}_\mani$ and it is a sheaf of $\stsheafred$-modules. This translates as follows at the level of the even transition functions of the (projected) supermanifold: considering a supermanifold of dimension $p|q$ with an atlas given by $\{ \mathcal{U}_i, z_{\kappa i } | \theta_{\ell i} \}_{ i \in I}$ where the $z_{\kappa i} | \theta_{\ell i}$ are the even and odd local coordinates, for $\kappa = 1, \ldots, p$ and $\ell = 1, \ldots, q$, in an intersection $\mathcal{U}_i \cap \mathcal{U}_j$, for the \emph{even} transition functions one simply gets
\bear
z_{\kappa i } (\underline z_{j}, \underline \theta_{j}) = z_{\kappa i } (\underline z_{j}),
\eear
where the $z_{\kappa i}$ are holomorphic as functions of $\underline {z}_j$. That is, the even transition functions of a projected supermanifold are those of an ordinary complex manifold and indeed they correspond to those of $\manir$. In particular, obviously, split supermanifolds are projected supermanifolds (while the converse is not in general true) and their even transition functions are of the kind above. 

In the case there is no such projection $\pi : \mani \rightarrow \manir$ we say that the supermanifold is \emph{non-projected} and its structure sheaf is not, in general, a sheaf of $\stsheafred$-modules. Keeping the same notation as above, in the intersection $\mathcal{U}_i \cap \mathcal{U}_j$ the \emph{even} transition functions of a non-projected supermanifold then get corrections by some (even) combination of nilpotent odd coordinates and they can be written as  
\bear \label{transf}
z_{\kappa i } (\underline z_{j}, \underline \theta_{j}) = z_{\kappa i } (\underline z_{j}) + \sum_{n=1}^{\lfloor q/2 \rfloor} \omega^{(2n)}_{ij} (\underline z_j, \underline \theta_j) \cdot z_{\kappa i}.
\eear
In the expression above $\omega_{ij}^{(2n)} \in Z^1 ( \mathcal{T}_{\manir} \otimes Sym^{2n} \mathcal{F}_\mani) (\mathcal{U}_i \cap \mathcal{U}_j)$ is a representative of \emph{$n$-th even obstruction class} $\omega^{(2n)} \in H^1 (\mathcal{T}_{\manir} \otimes Sym^{2n} \mathcal{F}_\mani).$ \\
A sufficient condition for a supermanifold to be non-projected is given by the following
\begin{theorem} Let $\mani$ be a complex supermanifold. If $ 0 \neq \omega_\mani \in H^1(\mani, \mathcal{T}_{\manir} \otimes Sym^2 \mathcal{F}_\mani) $ $\mani$, then $\mani$ is non-projected.
\end{theorem}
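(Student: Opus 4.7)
The plan is to proceed by contrapositive: I assume that $\mani$ is projected and deduce that the obstruction class $\omega_\mani \in H^1(\mani, \mathcal{T}_{\manir} \otimes Sym^2 \mathcal{F}_\mani)$ must vanish. The idea is to exploit the local description of the even transition functions recalled in (\ref{transf}).

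First I would verify that the nilpotent corrections $\omega^{(2)}_{ij}$ appearing at quadratic order in the odd coordinates in (\ref{transf}) assemble into a well-defined cohomology class. This requires two checks. (i) On a triple overlap $\mathcal{U}_i \cap \mathcal{U}_j \cap \mathcal{U}_k$, composing the three coordinate changes $(\underline z_k, \underline\theta_k) \mapsto (\underline z_j, \underline \theta_j) \mapsto (\underline z_i, \underline \theta_i)$ and comparing the result with the direct change $(\underline z_k, \underline\theta_k) \mapsto (\underline z_i, \underline \theta_i)$ modulo $\mathcal{J}_\mani^4$ singles out, at degree two in the odd variables, the Čech cocycle relation $\omega^{(2)}_{ik} = \omega^{(2)}_{ij} + \omega^{(2)}_{jk}$, once each $\omega^{(2)}_{ij}$ is read as a local section of $\mathcal{T}_{\manir} \otimes Sym^2 \mathcal{F}_\mani$. (ii) A shift of the local even coordinates $z_{\kappa i} \mapsto z_{\kappa i} + \eta^{(2)}_i \cdot z_{\kappa i}$ with $\eta^{(2)}_i$ a local section of $\mathcal{T}_{\manir} \otimes Sym^2 \mathcal{F}_\mani$ on $\mathcal{U}_i$ modifies the cocycle by $\eta^{(2)}_i - \eta^{(2)}_j$, so the class $[\omega^{(2)}]$ does not depend on the chosen atlas.

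Next I would use projectedness to produce an atlas in which every $\omega^{(2)}_{ij}$ vanishes identically. A retraction $\pi : \mani \rightarrow \manir$ splits the short exact sequence $0 \rightarrow \mathcal{J}_\mani \rightarrow \stsheaf \rightarrow \stsheafred \rightarrow 0$ and makes $\stsheaf$ a sheaf of $\stsheafred$-modules through $\pi^\sharp$. Locally I may then choose the even coordinates on $\mani$ as $\pi^\sharp$-pullbacks of local coordinates on $\manir$; for such a system the even transition functions coincide with those of the reduced manifold and carry no nilpotent part whatsoever. Together with the well-definedness established in the previous step, this gives $\omega_\mani = 0$ in cohomology, which is the contrapositive of the theorem.

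The delicate point, and the one where I expect to spend most effort, is the order-by-order bookkeeping in (i): one must extract exactly the $Sym^2$-symmetric contribution of $\mathcal{F}_\mani \otimes \mathcal{F}_\mani$ from the expansion, separate it from the antisymmetric pieces and from the higher-order even nilpotent terms that feed into $\omega^{(4)}_{ij}, \omega^{(6)}_{ij}, \ldots$, and check that the resulting local object really is a derivation of $\stsheafred$ twisted by $Sym^2 \mathcal{F}_\mani$. Once this identification is in place the projected case is immediate, since pullback coordinates leave no room for nilpotent additions in the even transition functions.
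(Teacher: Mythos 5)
Your proposal is correct in outline, but note that the paper does not actually prove this theorem: its ``proof'' consists of citing Green's 1982 paper, Donagi--Witten, and \cite{CNR} (where the odd dimension $2$ case is worked out). What you have written is essentially a reconstruction of the standard argument from those references, so the comparison is really between your sketch and the literature rather than with anything in the text. Your two key points are exactly the right ones: the real content is the atlas-independence of the class (your steps (i) and (ii)), and the only implication needed is ``projected $\Rightarrow \omega^{(2)}=0$'' --- the converse is false because of the higher obstructions $\omega^{(4)},\ldots$, so proving the contrapositive of precisely this direction is the correct strategy. Two small points deserve to be made explicit when you flesh this out. First, in the projected case you should check that the pullbacks $\pi^\sharp(z_{\kappa i})$ together with a choice of odd coordinates really do constitute an admissible atlas (their reductions modulo $\mathcal{J}_\mani$ are coordinates on $\manir$, so the super Jacobian is invertible modulo nilpotents), and that $\pi^\sharp$, being a morphism of analytic ringed spaces, commutes with composition by the holomorphic transition functions of $\manir$; both are easy but they are where projectedness is actually used. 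Second, in step (ii) the admissible changes of atlas also include modifications of the \emph{odd} coordinates, $\theta_{\ell i}\mapsto \theta_{\ell i}+(\text{terms in }\mathcal{J}_\mani^3)$, and reparametrisations $z\mapsto g(z)$ of the reduced coordinates; one should observe that neither affects the degree-two part of the even transition functions beyond the coboundary you already accounted for, so the freedom really is exhausted by $\eta^{(2)}_i\in\left(\mathcal{T}_{\manir}\otimes Sym^2\mathcal{F}_\mani\right)(\mathcal{U}_i)$. With those additions your argument is complete and agrees with the proof in the cited sources.
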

\begin{proof} this is the main result of \cite{Green}. For a more recent account see \cite{DonWit}. The interested reader might find a detailed proof in case the odd dimension of the supermanifold is equal to 2 in \cite{CNR}.
\end{proof}
\noindent Relying on this theorem, we call $\omega_\mani \defeq \omega^{(2)} \in H^1 (\mathcal{T}_{\manir} \otimes Sym^2 \mathcal{F}_\mani)$ the \emph{fundamental obstruction class} of the supermanifold $\mani$. \\
At this point, it is important to notice that the transition functions of the $\Pi$-projective space $\proj n_\Pi$ we have found in Theorem \ref{pitrans} are a typical example of transition functions of a non-projected supermanifolds: in particular, it is crucial to note the $\theta\theta$-correction in the even transition functions, 
\bear
z_{\ell j} = \frac{z_{\ell i}}{z_{j i}} + \frac{\theta_{j i} \theta_{\ell i}}{z^2_{ji}}.
\eear
The bit having form ${\theta \theta}/{z^2}$ is indeed the signal of certain cocycle representing the fundamental obstruction class $\omega_{\proj n_\Pi}.$ The identification of $\omega_{\proj n_\Pi}$ will play a crucial role as we shall see shortly.\\
Incidentally, before we go on, we stress that also the \emph{odd} transition functions might not be just linear in the nilpotent odd coordinates - that is, they might fail to be the transition functions of a locally-free sheaf of $\stsheafred$-modules. Again, this corresponds to the presence of certain non-null cohomology classes, but we will not dwell further into this topic as these \emph{odd obstruction classes} will not be used in the paper: we will only need the fundamental obstruction class $\omega_\mani \in H^1 (\mathcal{T}_{\manir} \otimes Sym^2 \mathcal{F}_\mani)$.

\section{Cotangent Sheaf, Obstructions and $\Pi$-Projective Spaces} 
\noindent For future use we start fixing the notation we are to adopt when working on ordinary projective spaces $\proj n$. We consider the usual covering by $n+1$ open sets $\{ \mathcal{U}_i \}_{i =0 }^n$ characterised by the condition $\mathcal{U}_i \defeq \{ [X_0 : \ldots : X_n] \in \proj n: X_i \neq 0\}$. Defining the affine coordinates to be
\bear
z_{ji} \defeq \frac{X_j}{X_i},
\eear
we have that $\proj n$ gets covered by $n+1$ affine charts as follows $(\mathcal{U}_i, \mathbb{C}[z_{1i}, \ldots, z_{ni}])$. This allows to easily write down the transition functions for two sheaves of interest, the tangent and the cotangent sheaf. 
\begin{itemize}
\item{\bf Tangent Sheaf} $\mathcal{T}_{\proj n}:$ on the intersection $\mathcal{U}_i \cap \mathcal{U}_j$ one finds:
\begin{align}
& \partial_{z_{ji}} = - z_{ij} \sum_{k \neq j} z_{kj} \partial_{z_{kj}} \\
& \partial_{z_{ki}} = z_{ij} \partial_{z_{kj}} \qquad k \neq j
\end{align}
\item { \bf Cotangent Sheaf} $\Omega^1_{\proj n}:$ on the intersection $\mathcal{U}_i \cap \mathcal{U}_j$ one finds:
\begin{align}
& dz_{ji} = - \frac{dz_{ij}}{z_{ij}^2}  \\
& dz_{ki} = - \frac{z_{kj}}{z_{ij}^2} dz_{ij} + \frac{dz_{kj}}{z_{ij}} \qquad k \neq j
\end{align}
\end{itemize}
We now consider a supermanifold of dimension $n|n$ having reduced space given by $\proj n$ and a fermionic sheaf $\mathcal{F}_\mani$ given by $\Pi \Omega^1_{\proj n}$. This is a sheaf of $\mathcal{O}_{\proj n}$-modules of rank $0|n$, that is locally-generated on $\mathcal{U}_i$ by $n$ odd elements $\{\theta_{1i}, \ldots, \theta_{ni} \}$ that transform on the intersections $\mathcal{U}_i \cap \mathcal{U}_j$ as the local generators of the cotangent sheaf, $\{dz_{1i}, \ldots, dz_{ni}\}$: in other words, the correspondence is $dz_{ki} \leftrightarrow \theta_{ki}$ for $k \neq i$. \\
The crucial observation is that \emph{the fermionic sheaf} $\mathcal{F}_\mani = \Pi \Omega^1_{\proj n}$ \emph{determined by the cotangent sheaf} $\Omega^1_{\proj n}$ \emph{reproduces exactly the odd transition functions of} $\proj n_{\Pi}.$ It is then natural to ask whether the whole $\mathcal{O}_{\proj n_{\Pi}}$ is determined someway by $\Pi \Omega^1_{\proj n}.$ We will see that this question has an affermative answer, by realising $\proj n_{\Pi}$ as the non-projected supermanifold whose even part of the structure sheaf $\mathcal{O}_{\proj n_{\Pi}}$ is determined by $\mathcal{O}_{\proj n}$ and the fundamental obstruction class $\omega_\mani.$\\
First we need to prove that the choice $\mathcal{F}_\mani = \Pi \Omega^1_{\proj n}$ can actually give rise to a non-projected supermanifold. For this to be true it is enough that $H^1(\mathcal{T}_{\proj n} \otimes Sym^2 \Pi \Omega^1_{\proj n} ) \neq 0$: this is achieved in the following
\begin{lemma} $H^1 (\mathcal{T}_{\proj n} \otimes Sym^2 \Pi \Omega^1_{\proj n}) \cong \mathbb{C}$
\end{lemma}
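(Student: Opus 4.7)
The plan is to reduce this to an ordinary sheaf cohomology computation on $\proj n$ and then use the Euler sequence combined with Bott's formula.

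\textbf{Step 1 (Parity identification).} Since $\Omega^1_{\proj n}$ is purely even, the Koszul sign rule identifies $\operatorname{Sym}^2(\Pi \Omega^1_{\proj n})$ with $\bigwedge^2 \Omega^1_{\proj n} = \Omega^2_{\proj n}$: the supersymmetrisation of two odd generators $v \otimes w + (-1)^{|v||w|} w \otimes v = v \otimes w - w \otimes v$ is the antisymmetrisation of the underlying even sheaf. So the task reduces to proving
\[
H^1(\proj n, \mathcal{T}_{\proj n} \otimes \Omega^2_{\proj n}) \cong \mathbb{C},
\]
which is exactly the form of the cohomology class appearing in the abstract. Note this already requires $n \geq 2$; for $n=1$ one has $\Omega^2_{\proj 1} = 0$ and the statement fails, in agreement with the fact that $\proj 1_\Pi$ was described as split.

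\textbf{Step 2 (Euler sequence).} I would twist the Euler sequence
\[
0 \to \mathcal{O}_{\proj n} \to \mathcal{O}_{\proj n}(1)^{\oplus (n+1)} \to \mathcal{T}_{\proj n} \to 0
\]
by the locally-free sheaf $\Omega^2_{\proj n}$, obtaining
\[
0 \to \Omega^2_{\proj n} \to \Omega^2_{\proj n}(1)^{\oplus (n+1)} \to \mathcal{T}_{\proj n} \otimes \Omega^2_{\proj n} \to 0,
\]
and extract the relevant piece of the long exact cohomology sequence:
\[
H^1(\Omega^2(1))^{\oplus (n+1)} \to H^1(\mathcal{T}\otimes\Omega^2) \to H^2(\Omega^2) \to H^2(\Omega^2(1))^{\oplus (n+1)}.
\]

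\textbf{Step 3 (Bott's formula).} Assuming $n \geq 2$, by Bott's formula for $H^p(\proj n, \Omega^q(k))$ the outer terms vanish: for $(p,q,k) = (1,2,1)$ and $(2,2,1)$, the triple satisfies none of the three non-vanishing cases ($p=0, k>q$; $p=n, k<q-n$; or $p=q, k=0$). On the other hand $H^2(\proj n, \Omega^2_{\proj n}) \cong \mathbb{C}$ falls into the diagonal Hodge case $p=q=2, k=0$. The long exact sequence then collapses to the isomorphism $H^1(\mathcal{T}_{\proj n}\otimes\Omega^2_{\proj n}) \cong \mathbb{C}$, as required.

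\textbf{Main obstacle.} The only non-routine point is Step 1: making sure that the supergeometric convention for $\operatorname{Sym}^2$ of an odd locally-free sheaf agrees with $\bigwedge^2$ of the underlying even sheaf, so that the class computed actually matches the $\omega \in H^1(\mathcal{T}_{\proj n} \otimes \bigwedge^2 \Omega^1_{\proj n})$ of the abstract. Once this convention is fixed the rest is a mechanical Euler sequence plus Bott's formula computation.
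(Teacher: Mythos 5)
Your proposal is correct and follows essentially the same route as the paper: identify $Sym^2\Pi\Omega^1_{\proj n}$ with $\bigwedge^2\Omega^1_{\proj n}$ by the parity/Koszul sign rule, twist the Euler sequence by $\Omega^2_{\proj n}$, and apply Bott's formulas to kill the outer terms and obtain $H^1(\mathcal{T}_{\proj n}\otimes\Omega^2_{\proj n})\cong H^2(\Omega^2_{\proj n})\cong\mathbb{C}$. Your explicit verification of the Bott vanishing conditions and the remark on the $n=1$ degeneration are slightly more detailed than the paper's one-line appeal to the formulas, but the argument is the same.
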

\begin{proof} It is important to realise that due to parity reason, one has $Sym^2 \Pi \Omega^1_{\proj n} \cong \bigwedge^2 \Omega^1_{\proj n}$, therefore it amounts to evaluate $H^1(\mathcal{T}_{\proj n} \otimes \bigwedge^2 \Omega^1_{\proj n}):$ this can be done using the Euler exact sequence tensored by $\Omega^2_{\proj n} \defeq \bigwedge^2 \Omega^1_{\proj n}, $ this reads
\bear
\xymatrix@R=1.5pt{ 
0 \ar[r] & \Omega^2_{\proj n}  \ar[r] & \Omega^2_{\proj n}(+1)^{\oplus n+1} \ar[r] & \mathcal{T}_{\proj n} \otimes \Omega^2_{\proj n} \ar[r] & 0.
 } 
\eear
Using Bott formulas (see for example \cite{OkScSp}) to evaluate the cohomology of $\Omega^2_{\proj n}$ and $ \Omega^2_{\proj n}(+1)$ one is left with the isomorphism $H^1 ( \mathcal{T}_{\proj n} \otimes \Omega^2_{\proj n}) \cong H^2 (\Omega^2_{\proj n}) \cong \mathbb{C}$, again by Bott formulas. 
\end{proof}
A consequence of the lemma is that each choice of a class $0 \neq \omega_\mani \in H^1 (\mathcal{T}_{\proj n} \otimes Sym^2 \Pi \Omega^1_{\proj n})$ gives rise to a non-projected supermanifold having reduced space $\proj n$ and fermionic sheaf $\Pi \Omega^1_{\proj n}.$ \\
Making use of the Bott formulas, as in proof of previous Lemma, is certainly the briefest and easiest to show the non vanishing of the cohomology group. Anyway, there is another more instructive way to achieve the same result: this, has also the merit to allow the right setting to find the representative of $H^1 (\mathcal{T}_{\proj n} \otimes Sym^2 \Pi \Omega^1_{\proj n})$. Keeping in mind that $Sym^2 \Pi \Omega^2_{\proj n} \cong \bigwedge^2 \Omega^1_{\proj n }$, one starts from the dual of the the Euler exact sequence, that reads
\bear
\xymatrix@R=1.5pt{ 
0 \ar[r] & \Omega^1_{\proj n}  \ar[r] & \mathcal{O}_{\proj n}(-1)^{\oplus n+1} \ar[r] &\mathcal{O}_{\proj n} \ar[r] & 0.
 } 
\eear
Taking its second exterior power one gets 
\bear \label{wedge2}
\xymatrix@R=1.5pt{ 
0 \ar[r] & \bigwedge^2 \Omega^1_{\proj n}  \ar[r] & \bigwedge^2 \left (\mathcal{O}_{\proj n}(-1)^{\oplus n+1} \right ) \ar[r] &\Omega^1_{\proj n} \ar[r] & 0.
 } 
\eear
Notice that clearly $\bigwedge^2 \mathcal{O}_{\proj n}(-1)^{\oplus n+1} \cong \mathcal{O}_{\proj n}(-2)^{\oplus {n+1 \choose 2}}$, and, more important, that the existence of this short exact sequence depends on the fact that $\mathcal{O}_{\proj n}$ is of rank $1$. A more careful discussion of the general framework for second exterior powers of short exact sequences of locally-free sheaf of $\mathcal{O}_{\manir}$-modules is deferred to the Appendix.\\
This short exact sequence can be in turn tensored by $\mathcal{T}_{\proj n}$ as to yield 
\bear \label{wedge2T}
\xymatrix@R=1.5pt{ \qquad 
0 \ar[r] & \mathcal{T}_{\proj n } \otimes \bigwedge^2 \Omega^1_{\proj n}  \ar[r] & \mathcal{T}_{\proj n } (-2)^{\oplus {n+1 \choose 2}} \ar[r] & \mathcal{T}_{\proj n } \otimes \Omega^1_{\proj n} \ar[r] & 0.
 } 
\eear
Upon using the Euler exact sequence for the tangent sheaf twisted by $\mathcal{O}_{\proj n} (-2)$, one sees that for $n >1$ the cohomology groups $H^0(\mathcal{T}_{\proj n } (-2)^{\oplus {n+1 \choose 2}})$ and $H^1(\mathcal{T}_{\proj n } (-2)^{\oplus {n+1 \choose 2}})$ are zero, and therefore the long exact cohomology sequence gives the isomorphism $H^0 (\mathcal{T}_{\proj n } \otimes \Omega^1_{\proj n}) \cong H^1 (\mathcal{T}_{\proj n} \otimes \bigwedge^2 \Omega^1_{\proj n}).$\\
The global section generating $H^0 (\mathcal{T}_{\proj n} \otimes \Omega^1_{\proj n})$ is easily identified as the diagonal element in $C^0 (\mathcal{T}_{\proj n} \otimes \Omega^1_{\proj n})$. By representing it locally, in the chart $\mathcal{U}_i$, one has
\bear
H^0 (\mathcal{T}_{\proj n} \otimes \Omega^1_{\proj n}) \cong \big \langle \sum_{j \neq i} \partial_{z_{ji}} \otimes dz_{ji} \big \rangle_{\mathbb{C}}.
\eear
and it is easily proved that it actually defines a global section.\\
We aim to lift this element to the generator of $H^1(\mathcal{T}_{\proj n} \otimes \bigwedge^2 \Omega^1_{\proj n})$, making the isomorphism explicit: this will be the key step of our construction of $\Pi$-projective spaces as non-projected supermanifolds. To achieve this, we need to study carefully the homomorphisms of sheaves entering the exact sequence \ref{wedge2}. First we consider the injective map $\wedge^2 \iota : \bigwedge \Omega^1_{\proj n} \rightarrow \bigwedge^2 \mathcal{O}_{\proj n} (-1)$. This is given by 
\bear
\xymatrix@R=1.5pt{ \qquad 
\wedge^2 \iota : \bigwedge^2 \Omega^1_{\proj n} \ar[rr] && \bigwedge^2 \left (\mathcal{O}_{\proj n} (-1)^{\oplus n+1}\right )\\
 df \wedge dg \ar@{|->}[rr] && \iota (df) \wedge \iota (dg)
 } 
\eear
where $\iota : \Omega^1_{\proj n} \rightarrow \mathcal{O} (-1)^{\oplus n+1}$ is the map that enters the dual of the Euler exact sequence, that is, working for example in the chart $\mathcal{U}_i$, 
\bear
\xymatrix@R=1.5pt{ \qquad 
\iota : \Omega^1_{\proj n} \ar[rr] && \mathcal{O}_{\proj n} (-1)^{\oplus n+1}\\
 df = \sum_{j \neq i} f_{ji} dz_{ji}  \ar@{|->}[rr]  && \left (\frac{f_{0i}}{X_i}, \ldots , - \frac{1}{X_i^2} \sum_{j \neq i} X_j f_{ji} , \ldots, \frac{f_{nj}}{X_i}\right ).
 } 
\eear
Getting back to \ref{wedge2} and keep working in the chart $\mathcal{U}_i$, the map $\Phi_2 :\bigwedge^2 \left ( \mathcal{O}_{\proj n}(-1)^{\oplus n+1} \right ) \rightarrow \Omega^1_{\proj n}$ is defined as follows
\bear
\xymatrix@R=1.5pt{ \nonumber
\Phi_2 : \bigwedge^2 \left ( \mathcal{O}_{\proj n}(-1)^{\oplus n+1} \right ) \ar[rr] &&\Omega^1_{\proj n}\\
 (f_0, \ldots, f_n) \wedge (g_0, \wedge, g_n) \ar@{|->}[rr]  && X_i \sum_{j=0}^n \sum_{k \neq i} X_j \left (f_j \otimes g_k -  g_j \otimes f_k \right ) d \left (\frac{X_k}{X_i}\right),
 } 
\eear
where $d (X_k / X_i ) = dz_{ki}.$ The reader can check that these maps give rise to an exact sequence of locally-free $\mathcal{O}_{\proj n}$-modules. Clearly, the maps entering the exact sequence \ref{wedge2T} are just the same tensored by identity on the tangent sheaf.\\
Upon knowing these map, we can prove the following lemma.
\begin{lemma}[Lifting] \label{lifting} The cohomology group $H^1 (\mathcal{T}_{\proj n} \otimes \bigwedge^2 \Omega^1_{\proj n})$ is represented by $\{ \omega_{ij} \}_{i < j } \in \prod_{i<j} \left ( \mathcal{T}_{\proj n} \otimes \bigwedge^2 \Omega^1_{\proj n} \right )(\mathcal{U}_i \cap \mathcal{U}_j)$, such that 
\bear
\omega_{ij} = \sum_{k \neq j} \frac{dz_{ij} \wedge dz_{kj}}{z_{ij}} \otimes \partial_{z_{kj}}.  
\eear
\end{lemma}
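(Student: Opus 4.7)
The idea is to realise $\omega_{ij}$ as the image of the canonical generator
\bear
\sigma \defeq \sum_{k \neq i} \partial_{z_{ki}} \otimes dz_{ki} \in H^0(\mathcal{T}_{\proj n} \otimes \Omega^1_{\proj n})
\eear
under the \v{C}ech connecting homomorphism $\delta \colon H^0(\mathcal{T}_{\proj n} \otimes \Omega^1_{\proj n}) \xrightarrow{\sim} H^1(\mathcal{T}_{\proj n} \otimes \bigwedge^2 \Omega^1_{\proj n})$ associated to the short exact sequence \ref{wedge2T}. The discussion preceding the statement already shows $\delta$ is an isomorphism via Bott vanishing of the flanking $\mathcal{T}_{\proj n}(-2)^{\oplus \binom{n+1}{2}}$-terms, so producing any explicit cocycle representing $\delta(\sigma)$ will yield a representative of the generator.

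First I would produce local lifts. On the chart $\mathcal{U}_i$, the dual Euler sequence splits by $1 \mapsto e_i/X_i$; squaring this splitting gives a right inverse of the map $\Phi_2$ of \ref{wedge2} over $\mathcal{U}_i$, namely $dz_{ki} \mapsto (e_k \wedge e_i)/X_i^2$ for $k \neq i$. Tensoring with $\mathrm{id}_{\mathcal{T}_{\proj n}}$, I obtain the local lift
\bear
\tilde\sigma_i \defeq \sum_{k \neq i} \partial_{z_{ki}} \otimes \frac{e_k \wedge e_i}{X_i^2} \in \bigl(\mathcal{T}_{\proj n} \otimes \bigwedge^2 \mathcal{O}_{\proj n}(-1)^{\oplus n+1}\bigr)(\mathcal{U}_i).
\eear
By construction both $\tilde\sigma_i$ and $\tilde\sigma_j$ project to $\sigma$ on $\mathcal{U}_i \cap \mathcal{U}_j$, so their difference $\tilde\sigma_j - \tilde\sigma_i$ is a \v{C}ech $1$-cocycle with values in the kernel of $\mathrm{id} \otimes \Phi_2$, i.e.\ in $(\mathrm{id} \otimes \wedge^2 \iota)(\mathcal{T}_{\proj n} \otimes \bigwedge^2 \Omega^1_{\proj n})$, and the general connecting-map recipe identifies this difference with $\delta(\sigma)$.

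The remaining and main step is to rewrite $\tilde\sigma_j - \tilde\sigma_i$ back in the $\bigwedge^2 \Omega^1_{\proj n}$-basis. Using the transition rules for $\mathcal{T}_{\proj n}$ and $\mathcal{O}_{\proj n}(-1)^{\oplus n+1}$ recalled at the start of this section, together with the explicit formula $\iota(dz_{aj}) = (e_a - z_{aj} e_j)/X_j$, the expansion splits into contributions proportional to $e_j \wedge e_i$, $e_k \wedge e_i$ and $e_k \wedge e_j$ (for $k \neq i,j$). After exploiting the antisymmetry of the wedge and the identity $dz_{ij} \wedge dz_{ij} = 0$, the result collapses to $(\mathrm{id} \otimes \wedge^2 \iota)$ applied to $\omega_{ij} = \sum_{k \neq j} z_{ij}^{-1}\, dz_{ij} \wedge dz_{kj} \otimes \partial_{z_{kj}}$, which is the claim. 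The main obstacle is precisely this final algebraic simplification — the many terms produced by expansion must be collected with correct signs using the transition rules. A useful consistency check is that the resulting $\omega_{ij}$ indeed satisfies the \v{C}ech cocycle condition $\omega_{ij} - \omega_{ik} + \omega_{jk} = 0$ on triple intersections, which is automatic from the construction.
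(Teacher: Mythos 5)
Your proposal follows essentially the same route as the paper: lift the diagonal global section $\sum_{k\neq i}\partial_{z_{ki}}\otimes dz_{ki}$ chartwise to $\mathcal{T}_{\proj n}\otimes\bigwedge^2\mathcal{O}_{\proj n}(-1)^{\oplus n+1}$ via the Euler splitting, take the \v{C}ech coboundary, and recognise the resulting $1$-cocycle as $(\mathrm{id}\otimes\wedge^2\iota)(\omega_{ij})$. The only discrepancies are inessential sign conventions (the paper's $\Phi_2$ sends $e_i\wedge e_k/X_i^2\mapsto dz_{ki}$, and it computes $s_i-s_j$ rather than $\tilde\sigma_j-\tilde\sigma_i$), which at most change the representative by a global sign and do not affect the conclusion.
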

\begin{proof} We need to lift the element $(\sum_{j\neq i} \partial_{z_{ji}} \otimes dz_{ji})_{i = 0, \ldots, n} \in Z^0 (\mathcal{T}_{\proj n} \otimes \Omega^1_{\proj n})$ to $Z^{1}(\mathcal{T}_{\proj n} \otimes \bigwedge^2 \Omega^1_{\proj n})$ as in the following diagram: 
\bear
\xymatrix{  \nonumber 
C^{1} (\mathcal{T}_{\proj n} \otimes \bigwedge^2 \Omega^1_{\proj n}) \ar@{>->}[r] & C^1 (\mathcal{T}_{\proj n} \otimes \bigwedge^2 \mathcal{O}_{\proj n} (-1)^{\oplus n+1} ) & \\
& C^0 (\mathcal{T}_{\proj n} \otimes \bigwedge^2 \mathcal{O}_{\proj n} (-1)^{\oplus n+1} ) \ar@{>>}[r] \ar[u]^{\delta} & C^0 (\mathcal{T}_{\proj n} \otimes \Omega^1_{\proj n})
 } 
\eear 
where the maps are induced by those defining the short exact sequence \ref{wedge2T}. The first step is to find the pre-image of the element $\sum_{j\neq i} \partial_{z_{ji}} \otimes dz_{ji}$ in $C^0 (\mathcal{T}_{\proj n} \otimes \bigwedge^2 \mathcal{O}_{\proj n} (-1)^{\oplus n+1})$. We work, for simplicity, in the chart $\mathcal{U}_i$, and we look for elements $f$'s and $g$'s such that  
\bear \nonumber
\sum_{j \neq i} \partial_{z_{ji}} \otimes dz_{ji} \stackrel{!}{=} \left ( \sum_{j \neq i} \partial_{z_{ji}} \right ) \otimes \left ( \sum_{\ell \neq i} X_i \sum_{k = 0}^n X_k \left[ f_k^{(j)}  \otimes g^{(j)}_\ell - g_{k}^{(j)} \otimes f^{(j)}_{\ell} \right ] dz_{\ell i} \right ).
\eear 
The condition is satisfied by the choice
\bear
f^{(j)}_k = \frac{\delta_{ki}}{X_i} \qquad \qquad g^{(j)}_\ell = \frac{\delta_{\ell j}}{X_i},
\eear
so that one finds that the pre-image in the chart $\mathcal{U}_i$ reads
\bear
\Phi_2^{-1} \left (\sum_{j\neq i} \partial_{z_{ji}} \otimes dz_{ji} \right ) = \sum_{k \neq i} \partial_{z_{ki}} \otimes \frac{e_i \wedge e_k }{X_i^2}
\eear
where we have denoted $\{ e_i \wedge e_k \}_{i \neq k} $ a basis for the second exterior power $\bigwedge^{2} \mathcal{O}_{\proj n} (-1)^{\oplus n+1}.$ \\
Now we lift this element to a \v{C}ech 1-cochain by means of the \v{C}ech coboundary map $\delta$ as to get on an intersection $\mathcal{U}_i \cap \mathcal{U}_j$
\begin{align}
s_{ij} \defeq s_i - s_j \lfloor_{\mathcal{U}_i \cap \mathcal{U}_j} & =  \frac{1}{X_j^2} \sum_{k \neq i,j} \partial_{z_{kj}} \otimes  \left ( \frac{z_{kj}}{z_{ij}}  e_j \wedge e_i + \frac{1}{z_{ij}} e_i \wedge e_k + e_j \wedge e_k \right )  
\end{align}
It is not hard to verity that $(s_{ij})_{i \neq j}$ is the image through the injective map $\wedge^2 \iota$ of elements 
\bear
\omega_{ij}\defeq \sum_{k \neq j} \frac{dz_{ij} \wedge dz_{kj}}{z_{ij}} \otimes \partial_{z_{kj}} \in Z^1 (\mathcal{U}, \mathcal{T}_{\proj n} \otimes \bigwedge^2 \Omega^1_{\proj n}),
\eear
which represents the lifting of $\sum_{j \neq i} \partial_{z_{ji}} \otimes dz_{ji}$, and generates the cohomology group $H^1 (\mathcal{T}_{\proj n} \otimes \bigwedge^2 \Omega^1_{\proj n}) $. 
\end{proof}
Notice that one finds the actual elements that enter the transition functions of the non-projected supermanifold in the identification $dz_{ij} \wedge dz_{kj} \leftrightarrow \theta_{ij} \theta_{kj}$, where the second is to be understood as the symmetric product, giving an element in $Sym^2 \Pi \Omega^1_{\proj n}.$\\
The previous lemma gives all the elements we need in order to recognise the $\Pi$-projective space $\proj{n}_{\Pi}$ as the non-projected supermanifold is associated to the cotangent sheaf on $\proj n$. In particular, we have the following 
\begin{theorem}[$\Pi$-projective spaces] The $\Pi$-projected space $\proj n_\Pi \defeq (\proj n, \mathcal{O}_{\proj n_{\Pi}})$ is the non-projected supermanifold uniquely identified by the triple $(\proj n, \Pi \Omega^1_{\proj n}, \lambda)$, where $\lambda \neq 0$ is the representative of $\omega_\mani \in H^1(\mathcal{T}_{\proj n} \otimes Sym^2 \Pi \Omega^1_{\proj n}) \cong \mathbb{C}$. 
\end{theorem}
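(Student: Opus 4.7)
The plan is to check that the transition functions of $\proj n_\Pi$ collected in Theorem \ref{pitrans} coincide with those forced on the non-projected supermanifold built from the triple $(\proj n, \Pi\Omega^1_{\proj n}, \lambda)$ with $\lambda \neq 0$. Since in this setting (odd dimension equals $n$, but with only the fundamental obstruction contributing, as implicitly guaranteed by the parity-matched data) a non-projected supermanifold is determined up to isomorphism by its reduced space, its fermionic sheaf, and its fundamental obstruction class, matching the transition functions will prove the theorem.

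The odd transition functions pose essentially no difficulty. Choosing $\mathcal{F}_\mani = \Pi\Omega^1_{\proj n}$ amounts to declaring local odd frames $\{\theta_{ki}\}_{k\neq i}$ over $\mathcal{U}_i$ that transform via the cocycle of the cotangent sheaf under the identification $\theta_{ki}\leftrightarrow dz_{ki}$. Substituting the explicit transition rules for $\Omega^1_{\proj n}$ recalled at the beginning of the section reproduces exactly the odd part of \eqref{transi1}--\eqref{transi2}.

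For the even transition functions I would invoke Lemma \ref{lifting}: a non-zero representative of $\omega_\mani \in H^1(\mathcal{T}_{\proj n}\otimes \bigwedge^2\Omega^1_{\proj n})$ is, up to a scalar, the \v{C}ech $1$-cocycle
\begin{equation*}
\omega_{ij} \;=\; \sum_{k\neq j}\frac{dz_{ij}\wedge dz_{kj}}{z_{ij}}\otimes \partial_{z_{kj}}.
\end{equation*}
Under the identification $dz_{ij}\wedge dz_{kj}\leftrightarrow \theta_{ij}\theta_{kj}$ into $Sym^2\Pi\Omega^1_{\proj n}$, formula \eqref{transf} prescribes a nilpotent correction to the even transition functions of the form $z_{\ell i}\mapsto z_{\ell i}(\underline z_j) + \lambda\,\omega_{ij}(z_{\ell i})$, where the tangent-vector component of $\omega_{ij}$ acts as a derivation. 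A short computation, writing $z_{\ell i}$ as a function of the $j$-chart coordinates via $z_{\ell i} = z_{\ell j}/z_{ij}$ for $\ell\neq i,j$ and $z_{ji} = 1/z_{ij}$, applying $\partial_{z_{kj}}$, and reinstating the $i$-chart variables, produces exactly $\lambda\,\theta_{ji}\theta_{\ell i}/z_{ji}^2$, matching the nilpotent part of \eqref{transi1} for $\lambda = 1$.

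Uniqueness modulo the choice of representative follows from the fact that the global rescaling $\theta_{ki}\mapsto c\,\theta_{ki}$ is compatible across charts (the rescaling factor is a constant) and sends the cocycle $\lambda\,\omega_{ij}$ to $c^{2}\lambda\,\omega_{ij}$ while preserving the odd transition rules; hence any two non-vanishing values of $\lambda$ give isomorphic supermanifolds, and since $\dim H^1 = 1$ the isomorphism class is pinned down by the non-vanishing condition alone. The main obstacle I expect is the bookkeeping calculation that transports $\omega_{ij}$ from the $j$-chart (where Lemma \ref{lifting} naturally presents it) to the form $\theta_{ji}\theta_{\ell i}/z_{ji}^2$ living in $i$-chart variables; this is entirely mechanical once the transition formulas for $\mathcal{T}_{\proj n}$ from the opening of the section are used, and carries no real difficulty.
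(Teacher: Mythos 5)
Your proposal is correct and follows essentially the same route as the paper: the odd transition functions are read off from the cocycle of $\Pi\Omega^1_{\proj n}$, the even ones are obtained by letting the representative $\omega_{ij}$ from Lemma \ref{lifting} act as a derivation in formula \eqref{transf}, and the rescaling $\theta\mapsto c\,\theta$ normalises $\lambda$ to $1$. The only cosmetic difference is that you carry out the chart-transport of $\omega_{ij}$ slightly more explicitly than the paper does, which is harmless.
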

\begin{proof} It is enough to proof that the sheaf $\mathcal{O}_{\proj n_{\Pi}}$ can be determined out of the structure sheaf $\mathcal{O}_{\proj n}$ of $\proj n$, the fermionic sheaf $\Pi \Omega^1_{\proj n}$ and the non-zero representative $\lambda \in \mathbb{C} \setminus \{0\}$ of $\omega_\mani \in H^1 (\mathcal{T}_{\proj n} \otimes Sym^2 \Pi \Omega^1) \cong \mathbb{C}.$ We have already observed that the transition functions of $(\mathcal{O}_{\proj n_{\Pi}})_1$ do coincide with those of $\Pi \Omega^1_{\proj n}.$ Moreover, up to a change of coordinate or a scaling, $\lambda$ can be chosen equal to $1$. Then, we see that the transition functions of $(\mathcal{O}_{\proj n_\Pi})_0$ are determined by \ref{transf} as a non-projected extension of $\mathcal{O}_{\proj n}$ by $Sym^2 \Pi \Omega^1_{\proj n}$, as follows
\begin{align}
z_{ki} &= \frac{z_{kj}}{z_{ij}} + \left (\sum_{k\neq j}\frac{\theta_{ij}\theta_{kj}}{z_{ij}} \partial_{z_{kj}} \right ) z_{ki}  = \frac{z_{kj}}{z_{ij}} + \left (\sum_{k\neq j}\frac{\theta_{ij}\theta_{kj}}{z_{ij}^2} \partial_{z_{ki}} \right ) z_{ki} \\
& = \frac{z_{kj}}{z_{ij}} + \frac{\theta_{ij}\theta_{kj}}{z_{ij}^2}  
\end{align}
and clearly, $z_{ji} = 1/z_{ij}$. Here, we have used the result of the previous lemma \ref{lifting} to write the representatives of the fundamental obstruction class $\omega^{(2)}_{ij}$. This completes the proof. 
\end{proof}
Now that we have constructed $\Pi$-projective spaces as non-projected supermanifolds, we investigate a property that all of the $\Pi$-projective spaces share, regardless their dimensions: they have {trivial Berezinian sheaf}. Supermanifolds having this property are said Calabi-Yau supermanifolds, as the Berezianian sheaf is - in some sense - the only meaningful supersymmetric generalisation of the canonical sheaf. \vspace{5pt} 

\noindent Before we go into the proof of the theorem, we recall that given a supermanifold $\mani$, the Berezinian sheaf $\mathcal{B}er_{\mani}$ is, by definition the sheaf $\mathcal{B}er (\Omega^1_{\mani}) $. That is, given an open covering $\{ \mathcal{U}_i \}_{i \in \mathcal{I}}$ of the underlying topological space of $\mani$, the Berezinian sheaf $\mathcal{B}er_\mani$ is the sheaf whose transition functions $\{ g_{ij} \}_{i \neq j \in \mathcal{I}}$ are obtained by taking the Berezinian of the super Jacobian of a change of coordinates in $\mathcal{U}_{i} \cap \mathcal{U}_j$. 
\begin{theorem}[$\proj n_\Pi$ are Calabi-Yau supermanifolds] \label{SCY} $\Pi$-projective spaces $\proj n_\Pi$ have trivial Berezianian sheaf. That is, $\mathcal{B}er_{\proj n_\Pi} \cong \mathcal{O}_{\proj n_\Pi}.$
\end{theorem}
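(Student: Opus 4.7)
The plan is to compute the Berezinian of the super Jacobian for every coordinate transition of $\proj n_\Pi$ coming from Theorem \ref{pitrans}, and to show that it is identically equal to $1$. This immediately says that the cocycle defining $\mathcal{B}er_{\proj n_\Pi}$ is trivial, and so $\mathcal{B}er_{\proj n_\Pi} \cong \mathcal{O}_{\proj n_\Pi}$.

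To set up the computation on $\mathcal{U}_i \cap \mathcal{U}_j$, I single out $\alpha \defeq z_{ji}$ and $\beta \defeq \theta_{ji}$ as the first even and odd coordinates on $\mathcal{U}_i$, and I denote the remaining ones by $(z_\ell, \theta_\ell)$ for $\ell \neq i, j$. Ordering the target coordinates in the same way, the super Jacobian $J$ splits into four $n \times n$ blocks $A, B, C, D$ (derivatives of even or odd target coordinates with respect to even or odd source coordinates, respectively). A direct inspection of \eqref{transi1}-\eqref{transi2} shows that $A$ and $D$ are both lower-block-triangular with identical diagonal: the $(1,1)$-entry equals $-1/\alpha^2$, the bottom-right $(n-1) \times (n-1)$ block equals $(1/\alpha)\, I_{n-1}$, and the first row has vanishing entries past the diagonal. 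In particular $\det(D) = -\alpha^{-(n+1)}$, a value that depends only on the diagonal. The block $A$ differs from $D$ only by a nilpotent correction $-2\beta\theta_\ell/\alpha^3$ sitting below the diagonal of the first column and produced by differentiating the $\theta\theta$-term in the transition of $z_{\ell j}$.

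The key step is to compute the cross-term $BD^{-1}C$ appearing in the formula $\ber{J} = \det(A - BD^{-1}C) \det(D)^{-1}$ and to verify that it does not affect $\det(A - BD^{-1}C)$. A direct calculation, using the explicit form of $B$ and $C$ together with the easily obtained $D^{-1}$, gives $(BD^{-1}C)_{1m} = 0$ for every $m$ because $B$ has vanishing first row, and $(BD^{-1}C)_{\ell m} = 0$ for $\ell, m \geq 2$ because each such entry is proportional to $\beta^2 = 0$. Hence $A - BD^{-1}C$ retains the lower-triangular shape and the diagonal of $D$, so that $\det(A - BD^{-1}C) = \det(D)$. This yields $\ber{J_{ij}} = 1$ on every double intersection (for $i > j$ one argues by taking the inverse Jacobian, whose Berezinian is the multiplicative inverse), and the claim follows.

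The main obstacle, if any, is the bookkeeping of signs coming from left derivatives acting on anticommuting coordinates, and the verification that the only surviving entry of $BD^{-1}C$, sitting in position $(\ell, 1)$, is absorbed harmlessly into the already-nilpotent first column of $A$ without affecting the determinant. Once this is in place, the common lower-triangular structure of $D$ and $A - BD^{-1}C$ equates the determinants and $\ber{J}=1$ everywhere, proving that $\proj n_\Pi$ is Calabi-Yau.
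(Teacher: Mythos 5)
Your proposal is correct and follows essentially the same route as the paper: a direct block computation of the Berezinian of the super Jacobian of the transition functions on a double intersection, exploiting the lower-triangular structure of the even blocks so that the nilpotent corrections drop out of the determinants. The only cosmetic differences are that you use the formula $\mathrm{Ber}(J)=\det(A-BD^{-1}C)\det(D)^{-1}$ where the paper uses $\det(A)\det(D-CA^{-1}B)^{-1}$, and that your uniform computation also absorbs the $n=1$ case, which the paper instead dispatches separately via splitness of $\proj 1_\Pi$.
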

\begin{proof} Let us consider the generic case $n\geq 2$. It is enough to prove the triviality of the Berezianian sheaf in a single intersection. One starts computing the super Jacobian of the transition functions in \ref{transi1} and \ref{transi2}: this actually gives the transition functions of the cotangent sheaf to $\proj n_\Pi$ in a certain intersection $\mathcal{U}_{i} \cap \mathcal{U}_j$, that can be represented in a super matrix of the form 
\bear
[\mathcal{J}ac]_{ij} = \left ( \begin{array}{ccc|ccc}
 & & & & &  \\
 & A &  &  & B & \\
 & & & & &  \\
 \hline
 & & & & &  \\
 & C &  &  & D & \\
 &  &  &  &  & 
\end{array}
\right )
\eear
for some $A$ and $D$ even and $B$ and $C$ odd sub-matrices depending on the intersection $\mathcal{U}_i \cap \mathcal{U}_j$. Then one computes $\mbox{Ber}\, [\mathcal{J}ac]_{ij} $ by means of the formula $\mbox{Ber} (X) = \det (A) \det (D- CA^{-1}B)^{-1}$ that reduces the computation of the Berezinian of a super matrix $X$ to a computation of determinants of ordinary matrices. It can be easily checked that for every non-empty intersection $\mathcal{U}_i \cap \mathcal{U}_j$ one gets 
\bear
\det A = - \frac{1}{z_{ij}^{n+1}}, \qquad \qquad \det (D- CA^{-1}B)^{-1} = - {z_{ij}^{n+1}},
\eear
so that $\mbox{Ber}\, [\mathcal{J}ac]_{ij} = 1$, proving triviality of $\mathcal{B}er_{\proj n_\Pi}$. The interested reader might find in the Appendix some explicit computation performed in the intersection $\mathcal{U}_0 \cap \mathcal{U}_1.$\\
Finally, the case $n = 1$ is trivial, as $\proj 1_\Pi$ is split: in general for a \emph{split} supermanifold $\mathcal{B}er_\mani \cong K_{\manir} \otimes \det \mathcal{F}^\ast_\mani$ where $K_{\manir}$ is the canonical sheaf of the reduced manifold, so that in the case of $\proj 1_{\Pi}$ one gets 
\bear
\mathcal{B}er_{\proj 1_\Pi} \cong K_{\proj 1}  \otimes (\Omega^1_{\proj 1})^\ast \cong \mathcal{O}_{\proj 1} (-2) \otimes \mathcal{O}_{\proj 1} (+2) \cong \mathcal{O}_{\proj 1}.
\eear 
thus concluding the proof.
\end{proof}
\noindent Before we move to the next section some remarks are in order. \begin{enumerate}
\item We did already know examples of \emph{split} Calabi-Yau supermanifolds in every bosonic/even dimension: these are the well-known projective superspaces of the kind $\proj {n|n+1}$ for every $n\geq 1$. The previous theorem characterises $\Pi$-projective spaces as relatively simple examples of \emph{non-projected} Calabi-Yau supermanifold for every bosonic/even dimension. In this context, the $\Pi$-projective line $\proj 1_\Pi$ is actually the only Calabi-Yau supermanifold of dimension $1|1$ which has $\proj 1$ as reduced space.
\item The proof of the previous theorem might appear inelegant and somehow cumbersome as in the case $n \geq 2$ it requires explicit knowledge of transition functions of the supermanifold in order to confirm the triviality of the Berezinian sheaf. Still, to the best knowledge of the author, this is actually unavoidable when dealing with non-projected supermanifolds, as the machinery of adjunction theory has not been developed yet and it is not available in this context. In order to answer more sophisticated questions and for classification issues, it would definitely be useful and necessary to fill this gap and provide a suitable adjunction theory for non-projected supermanifolds.       
\end{enumerate}

\section{A Glimpse at $\Pi$-Grassmannians}

In the previous section we have shown that $\Pi$-projective spaces arise as certain non-projected supermanifolds whose fermionic sheaf is related to the cotangent sheaf of their reduced manifold $\proj n$ and, as such, they have a very simple structure. 

Remarkably, something very similar happens more in general to $\Pi$-Grassmannians (see \cite{Manin}), backing the idea of a close connection between $\Pi$-symmetry in supergeometry and the ordinary geometry of cotangent sheaves of the ordinary reduced variety. Indeed we claim  
\begin{center}
\emph{\virgolette all of the $\Pi$-Grassmannians $G_{\Pi}(n,m)$ can be constructed as higher-dimensional non-projected supermanifolds whose fermionic sheaf is given precisely by the cotangent sheaf of their reduced manifold, the ordinary Grassmannian $G(n,m)$''}. 
\end{center}
The difference that makes things trickier compared to the case of the $\Pi$-projective spaces, is that also higher obstruction classes - not only the fundamental one - might appear, leading to non-projected \emph{and} non-split supermanifolds. \\
The construction of $\Pi$-Grassmannian as non-projected supermanifolds related to te cotangent sheaf $\Omega^{1}_{G(n,m)}$ of the underlying Grassmannian $G (n,m)$, the relation between their dimension, structure and the presence of higher obstructions to splitting will be the subject of a forthcoming paper. 

For the time being, in support of the above claim and as an illustrative example, we analyse the structure of the transition functions in certain big-cells of the $\Pi$-Grassmannian $G_{\Pi} (2,4)$: notice that, as in the ordinary context, this is the first $\Pi$-Grassmannian that is \emph{not} a $\Pi$-projective space.\\
We start considering the reduced manifold, the ordinary Grassmannian $G(2,4)$ and look at the change of coordinates between the big-cells
\begin{align}
\mathcal{Z}_{\mathcal{U}_1} = \left ( \begin{array}{cccc}
1 & 0 & x_{11} & x_{21} \\
0 & 1 & y_{11} & y_{21}  
\end{array}
\right ) \qquad \qquad 
\mathcal{Z}_{\mathcal{U}_2} = \left ( \begin{array}{cccc}
1 & x_{12} & 0 & x_{22} \\
0 & y_{12} & 1 & y_{22}  
\end{array}
\right ).
\end{align} 
By row-operations one easily finds that 
\begin{align}
& x_{12} = - \frac{x_{11}}{y_{11}}, \qquad & x_{22} = x_{21} - \frac{x_{11}y_{21}}{y_{11}}, \\
& y_{12} = \frac{1}{y_11}, \qquad & y_{22} = \frac{y_{21}}{y_{11}}.
\end{align}
In the correspondence $\{ dx_{ij} \leftrightarrow \theta_{ij}, dy_{ij} \leftrightarrow \xi_{ij} \}$ of the local frames of the cotangent sheaf with those of its parity-reversed version $\Pi \Omega^1_{G(2,4)}$ we are concerned with, one has the following transition functions 
\begin{align}
& \theta_{12} = - \frac{\theta_{11}}{y_11} + \frac{x_{11}}{y^2_{11}} \xi_{11}, \qquad  \theta_{22} = \theta_{21} - \frac{y_{21}}{y_{11}}\theta_{11} - \frac{x_{11}}{y_{11}} \xi_{21} + \frac{x_{11}y_{21}}{y_{11}^2} \xi_{11}, \\
& \xi_{12} = - \frac{\xi_{11}}{y_{11}^2}, \qquad  \qquad \qquad \; \xi_{22} = \frac{\xi_{21}}{y_11} - \frac{y_{21}}{y_{11}^2} \xi_{11}. 
\end{align}
Now look at the corresponding change of coordinates in $\mathcal{U}_1 \cap \mathcal{U}_2$ for $G_\Pi (2,4)$: the super big-cells then look like 
\begin{align} 
\mathcal{Z}_{\mathcal{U}_{1}} = \left ( \begin{array}{cccc||cccc}
 1 & 0 & x_{11} & x_{21} & 0 & 0 & \theta_{11} & \theta_{21} \\
0 & 1 & y_{11} & y_{21}  & 0 & 0 & \xi_{11} & \xi_{21} \\ 
\hline 
\hline 
0 & 0 & - \theta_{11} & - \theta_{21} & 1 & 0 & x_{11} & x_{21} \\
0 & 0 & - \xi_{11}  & - \xi_{21} & 0 & 1 & y_{11} & y_{21}
\end{array}
\right )
\end{align}
\begin{align}
\mathcal{Z}_{\mathcal{U}_{2}} = \left ( \begin{array}{cccc||cccc}
 1 & x_{12} & 0 & x_{22} & 0 & \theta_{12} & 0 & \theta_{22} \\
0 & y_{12} &1 & y_{22}  & 0 & \xi_{12} & 0 & \xi_{22} \\ 
\hline 
\hline 
0 & - \theta_{12} & 0  & - \theta_{22} & 1 & x_{12} & 0 & x_{22} \\
0 & - \xi_{12} & 0  & - \xi_{22} & 0 &  y_{12} & 1& y_{22}
\end{array}
\right ).
\end{align}
Again, by acting with row-operations on $\mathcal{U}_1$ one finds the following change of coordinates for $G_{\Pi} (2,4)$ in $\mathcal{U}_1 \cap \mathcal{U}_2$
\begin{align}
& \nonumber  x_{12} = - \frac{x_{11}}{y_{11}} - \frac{\theta_{11}\xi_{11}}{y_{11}^2}, \qquad  x_{22} = x_{21} - \frac{x_{11}y_{21}}{y_{11}} + \frac{\theta_{11} \xi_{21}}{y_11} - \frac{x_11}{y_11^2} \xi_{11}\xi_{21} - \frac{y_{21}}{y_{11}^2} \theta_{11}\xi_{11}, \\ 
& \nonumber y_{12} = \frac{1}{y_{11}}, \qquad \qquad \qquad \quad y_{22} = \frac{y_{21}}{y_{11}} + \frac{\xi_{11}\xi_{21}}{y_{11}^2},\\
& \nonumber \theta_{12} = - \frac{\theta_{11}}{y_{11}} + \frac{x_{11}}{y_{11}^2} \xi_{11},  \qquad \theta_{22} = \theta_{21} - \frac{y_{21}}{y_{11}}\theta_{11} - \frac{x_{11}}{y_{11}}\xi_{21} + \frac{x_{11}y_{21}}{y_{11}^2} \xi_{11} - \frac{\theta_{11}\xi_{11}\xi_{21}}{y_{11}^2} \\
& \nonumber \xi_{12} = - \frac{\xi_{11}}{y_{11}^2}, \qquad \qquad \qquad \; \xi_{22} = \frac{\xi_{21}}{y_{11}} - \frac{y_{21}}{y_{11}^2}\xi_{11}.
\end{align}
We observe the following facts: as in the the case of $\Pi$-projective spaces, the bosonic transition functions get nilpotent \virgolette corrections'' taking values in $Sym^2 \Pi \Omega^1_{G (2,4)} (\mathcal{U}_1 \cap \mathcal{U}_2)$. \\
More important, here is the difference: the fermionic transition functions are \emph{almost} the same but \emph{not} actually the same as those of $\Pi \Omega^{1}_{G (2,4)}$ above! Indeed, in the transition functions of $\theta_{22}$ appears a term taking values in $Sym^3 \Pi \Omega^1_{G (2,4)} (\mathcal{U}_1 \cap \mathcal{U}_2)$ - the term $- \frac{\theta_{11}\xi_{11}\xi_{21}}{y_{11}^2}$ - that tells that $G_{\Pi } (2,4)$ will also be characterised by the presence of an higher fermionic obstructions! \\
The study of the geometry of $\Pi$-Grassmannians as non-projected supermanifolds, their characterising algebraic-geometric invariants, and their special relationship with the cotangent sheaf of their underlying manifolds will be discussed in a dedicated follow-up paper.

\appendix 

\section{}

\noindent As the construction is not readily available in literature, we clarify in what follows the structure of the maps entering the second exterior power of a short exact sequence of locally-free sheaf of $\mathcal{O}_{X}$-modules / vector bundles, where $X$ is an ordinary complex manifold. We will work in full generality, even if the for the purpose of the paper it is enough to consider the (easier) special case in which the quotient sheaf is invertible.  

We start looking at the following exact sequence of locally-free sheaf of $\mathcal{O}_{X}$-modules: 
\bear
\xymatrix@R=1.5pt{
0 \ar[r] & \mathcal{F} \ar[r]^\iota & \mathcal{G} \ar[r]^\pi & \mathcal{H} \ar[r] &  0. 
}  
\eear
Then, in general, there is an exact sequence
\bear
\xymatrix@R=1.5pt{
0 \ar[r] & \bigwedge^2 \mathcal{F} \ar[r]^{\wedge^2 \iota} & \bigwedge^2 \mathcal{G} \ar[r]^\phi & \mathcal{Q} \ar[r] &  0. 
}  
\eear
where the map $\phi$ has yet to be defined and the quotient bundle fits into 
\bear
\xymatrix@R=1.5pt{
0 \ar[r] & \mathcal{F} \otimes \mathcal{H} \ar[r] & \mathcal{Q} \ar[r] & \bigwedge^2 \mathcal{H} \ar[r] &  0.
}  
\eear
Indeed, to get an idea, \emph{locally}, the first exact sequence splits to give $\mathcal{G} \cong \mathcal{F} \oplus \mathcal{H}$. Taking the second exterior power one gets 
\bear
\bigwedge^2 \mathcal{G} = \bigwedge^2 \mathcal{F} \oplus \left ( \mathcal{F} \oplus \mathcal{H} \right ) \oplus \bigwedge^2 \mathcal{H},
\eear 
therefore, keep working locally, taking the quotient by $\bigwedge^2 \mathcal{F}$ it gives 
\bear
\mathcal{Q} \cong \slantone{\bigwedge^2 \mathcal{G}}{\bigwedge^2 \mathcal{F}} \cong \left ( \mathcal{F} \otimes \mathcal{H} \right) \oplus \bigwedge^2 \mathcal{H}, 
\eear
that suggests why the second exact sequence is true.

Notice that if we consider the case $\mbox{rank}\, \mathcal{H} = 1$, then one has $\bigwedge^2 \mathcal{H} = 0$, and then sequence for $\mathcal{Q}$ tells that $\mathcal{Q} \cong \mathcal{F } \otimes \mathcal{H},$ therefore one finds that 
\bear
\xymatrix@R=1.5pt{
0 \ar[r] & \bigwedge^2 \mathcal{F} \ar[r]^{\wedge^2 \iota} & \bigwedge^2 \mathcal{G} \ar[r]^\phi & \mathcal{F} \otimes \mathcal{H} \ar[r] &  0. 
}  
\eear
Getting back to the general setting, in order to define the map $\phi$ we consider the alternating map
\bear
\xymatrix@R=1.5pt{
\Phi : \mathcal{G} \otimes \mathcal{G} \ar[rr] & & \mathcal{H} \otimes \mathcal{G} \\
  g_1 \otimes g_2 \ar@{|->}[rr] && \pi (g_1) \otimes g_2 - \pi (g_2) \otimes g_1.
}  
\eear
Notice that one has the commutative diagram
\bear
\xymatrix{
\mathcal{G} \otimes \mathcal{G} \ar[rr]^{\Phi} \ar[d]_q && \mathcal{H} \otimes \mathcal{G} \\
\bigwedge^2 \mathcal{G} \ar[urr]^{\phi_2} 
}  
\eear
and, by the usual universal property, the map $\Phi$ factors over $\bigwedge^2 \mathcal{G}$ to induce a map 
\bear
\xymatrix@R=1.5pt{
\Phi_2 : \mathcal{G} \otimes \mathcal{G} \ar[rr] & & \mathcal{H} \otimes \mathcal{G} \\
  g_1 \wedge g_2 \ar@{|->}[rr] && \pi (g_1) \otimes g_2 - \pi (g_2) \otimes g_1.
}  
\eear
We note that $\Phi_2 \circ \wedge^2 \iota = 0$, indeed: 
\begin{align}
\left (\Phi_2 \circ \wedge^2 \iota \right ) (f_1 \wedge f_2 ) = \Phi_2 (f_1 \wedge f_2 ) = \pi (f_1) \otimes f_2 - \pi (f_2) \otimes f_1 = 0, 
\end{align}
since $\iota : \mathcal{F} \rightarrow \mathcal{G} $ is an inclusion and since $\ker \pi = \mbox{im} \, \iota \cong \mathcal{F}.$ \\
Now, since $\Phi_2 \circ \wedge^2 \iota = 0$, one has that $\Phi_2  (\bigwedge^2 \mathcal{F}) = 0$, so that one has that, in turn $\Phi_2 $ factors through $\mathcal{Q},$ as to yield a well-defined map $\phi_2 : \mathcal{Q} \rightarrow \mathcal{F} \otimes \mathcal{H},$ as follows
\bear
\xymatrix{
\mathcal{G} \otimes \mathcal{G} \ar[rr]^{\Phi} \ar[d]_{q} && \mathcal{H} \otimes \mathcal{G} \\
\bigwedge^2 \mathcal{G} \ar[urr]^{\Phi_2} \ar[d]^{\phi} \\
\mathcal{Q} \ar[uurr]_{\phi_2} 
}  
\eear 
Now, let us examine the following exact sequence obtained by tensoring with $\mathcal{H} \otimes -$ the first exact sequence: 
\bear
\xymatrix@R=1.5pt{
0 \ar[r] & \mathcal{H} \otimes \mathcal{F} \ar[r]^{1 \otimes \iota} &  \mathcal{H} \otimes \mathcal{G} \ar[r]^{1 \otimes \pi } & \mathcal{H} \otimes \mathcal{H} \ar[r] &  0, 
}  
\eear
where the maps are the obvious ones. We observe that 
\begin{itemize}
\item $\mbox{im} \, 1 \otimes \iota \subset \Phi_2 (\bigwedge^2 \mathcal{G}),$ indeed for $ g\in \mathcal{G}$ such that $\pi (g) = h$ we have that 
\bear
(1 \otimes \iota ) (h \otimes f) = h \otimes \iota (f) = \pi (g) \otimes f - \pi (f) \otimes g = \Phi_2 (g \wedge f)
\eear
as $\pi (f) = 0$ and confusing $\iota (f) $ with $f$ (remember that $\iota$ is an immersion). This implies that $\mathcal{H } \otimes \mathcal{F} \cong \mbox{im}\, 1 \otimes \iota \subset \phi_2 (\mathcal{Q})$.
\item $\mbox{im} \, ((1 \otimes \pi) \circ \Phi_2 ) \subset \bigwedge^2 \mathcal{H}$, indeed 
\begin{align}
(1 \otimes \pi) \left ( \Phi_2 (g_1 \wedge g_2 )\right ) &= (1 \otimes \pi ) \left ( \pi (g_1) \otimes g_2 - \pi (g_2) \otimes g_1 \right ) \nonumber \\ 
& =  \pi (g_1) \otimes \pi (g_2) - \pi (g_2) \otimes \pi (g_1) =  \pi (g_1) \wedge \pi (g_2). 
\end{align}
Conversely, working on the fibers, one has that $\bigwedge^2 \mathcal{H} \subset \mbox{im} \left ( (1\otimes \pi ) \circ \Phi_2 \right )$, so that one concludes that $
\mbox{im} \, \left ((1 \otimes \pi) \circ \Phi_2 \right ) =\bigwedge^2 \mathcal{H} $ that in turn implies that 
\bear
\mbox{im} \, \left ((1 \otimes \pi) \circ \phi_2 \right ) =\bigwedge^2 \mathcal{H}.
\eear
This says that $\phi_2$ maps onto $\bigwedge^2 \mathcal{H}$. 
\end{itemize}
As we have shown that the map $\phi_2$ is such that $\mathcal{F} \otimes \mathcal{H} \subset \mbox{im}\, \phi_2$, and that $\phi_2$ is onto $\bigwedge^2 \mathcal{H}$, by counting the dimensions, $\phi_2 $ is actually also injective, therefore $\mathcal{F} \otimes \mathcal{H} \subset \phi_2 (\mathcal{Q}) $ implies that $\mathcal{F} \otimes \mathcal{G} \subset \mathcal{Q}, $ which establishes the second exact sequence for $\mathcal{Q}.$

\section{}

\noindent In this Appendix we support the proof of Theorem \ref{SCY}, by explicitly working out the Berezinian of the super Jacobian in the intersection $\mathcal{U}_0 \cap \mathcal{U}_1$ of the usual covering of $\proj n$. \\
Given the transition functions of $\proj n_{\Pi}$, as in \ref{transi1} and \ref{transi2}, in the intersection $\mathcal{U}_0\cap \mathcal{U}_1$, the super Jacobian matrix reads 
\bear
[\mathcal{J}ac]_{10} = \left ( \begin{array}{ccc|ccc}
 & & & & &  \\
 & A &  &  & B & \\
 & & & & &  \\
 \hline
 & & & & &  \\
 & C &  &  & D & \\
 &  &  &  &  & 
\end{array}
\right )
\eear
where one has
\begin{align}
& A = \left ( \begin{array}{ccccc} 
- \frac{1}{z_{10}^2} & 0 & \cdots & \cdots & 0 \\
- \frac{z_{20}}{z_{10}^2}-2 \frac{\theta_{10}\theta_{20}}{z_{10}^3} & \frac{1}{z_{10}} & 0 &  \cdots & 0\\
\vdots & & \ddots & & \\
\vdots & & & \ddots  & \\
- \frac{z_{n0}}{z_{10}^2}-2 \frac{\theta_{10}\theta_{{n}0}}{z_{10}^3} & 0 & \cdots & 0 & \frac{1}{z_{10}}
\end{array}
\right ) 
\end{align}
\begin{align}
& B = \left ( \begin{array}{ccccc} 
0 &  \cdots & \cdots & \cdots & 0 \\
\frac{\theta_{20}}{z_{10}^2} & - \frac{\theta_{10}}{z_{10}^2} & 0 &  \cdots & 0 \\
\vdots & & \ddots & & \\
\vdots & & & \ddots  & \\
\frac{\theta_{n0}}{z_{10}^2} & 0 & \cdots & 0 & - \frac{\theta_{10}}{z_{10}^2}
\end{array}
\right ) 
\end{align}
\begin{align}
& C = \left ( \begin{array}{ccccc} 
+2 \frac{\theta_{10}}{z_{10}^3} &  0  & \cdots & \cdots & 0 \\
- \frac{\theta_{20}}{z_{10}^2} + 2 \frac{z_{20}}{z_{10}^3}\theta_{10}  & - \frac{\theta_{10}}{z_{10}^2} & 0 &  \cdots & 0 \\
\vdots & & \ddots & & \\
\vdots & & & \ddots  & \\
- \frac{\theta_{{n}0}}{z_{10}^2} + 2 \frac{z_{n0}}{z_{10}^3}\theta_{10} & 0 & \cdots & 0 & - \frac{\theta_{10}}{z_{10}^2}
\end{array}
\right ) 
\end{align}
\begin{align}
& D = \left ( \begin{array}{ccccc} 
-\frac{1}{z_{10}^2} &  0  & \cdots & \cdots & 0 \\
- \frac{z_{20}}{z_{10}^2}  &  \frac{1}{z_{10}} & 0 &  \cdots & 0 \\
\vdots & & \ddots & & \\
\vdots & & & \ddots  & \\
- \frac{z_{n0}}{z_{10}^2} & 0 & \cdots & 0 & \frac{1}{z_{10}}
\end{array}
\right ). 
\end{align}
Now, clearly $\det (A) = - \frac{1}{z_{10}^{n+1}}$ and one can compute that
\bear
D-CA^{-1}B = \left ( \begin{array}{ccccc}
-\frac{1}{z_{10}^2} & 0 & \cdots & \cdots & 0 \\
- \frac{z_{20}}{z_{10}^2} - \frac{\theta_{10} \theta_{20}}{z_{10}^3} & \frac{1}{z_{10}} & 0 & \cdots & 0 \\
\vdots & & \ddots & & \\
\vdots & & & \ddots  & \\
- \frac{z_{n0}}{z_{10}^2} - \frac{\theta_{10} \theta_{n0}}{z_{10}^3} & 0 & \cdots & 0 & \frac{1}{z_{10}} 
\end{array}
\right )
\eear 
so that one finds $\det (D-CA^{-1}B)^{-1} = -z_{10}^{n+1}.$ Putting the two results together, one has
\bear
\qquad \qquad \mbox{Ber} \, [\mathcal{J}ac]_{10} = \det (A) \det(D-CA^{-1}B)^{-1} = \left (- \frac{1}{z_{10}^{n+1}}\right) \cdot \left (- z_{10}^{n+1} \right ) = 1. 
\eear  


\end{document}